\documentclass[11pt]{amsart}
\usepackage[english]{babel}
\usepackage[utf8]{inputenc}
\usepackage{graphicx}
\usepackage{xcolor}
\usepackage{longtable}
\usepackage{float}
\usepackage[all]{xy}
\usepackage{amssymb,amscd}
\usepackage{tikz}
\usepackage{csquotes}
\usepackage{imakeidx}
\usepackage{appendix}
\usepackage{tikz-cd}
\usepackage{enumitem}
\usepackage[colorlinks=true,linkcolor=blue,citecolor=blue,urlcolor=magenta,pagebackref=true]{hyperref}
\usepackage{cleveref}
\usepackage{calc}
\usepackage{mathtools}
\usepackage{comment}

\numberwithin{equation}{section}

\newtheorem{theorem}{Theorem}[section]
\newtheorem{proposition}[theorem]{Proposition}
\newtheorem{lemma}[theorem]{Lemma}
\newtheorem{corollary}[theorem]{Corollary}
\theoremstyle{remark}
\newtheorem{remark}[theorem]{Remark}

\newcommand{\A}{\mathbb A}
\newcommand{\PP}{\mathbb P}
\newcommand{\OO}{\mathcal O}
\newcommand{\bir}{\mathrel{\sim_{\mathrm{bir}}}}

\DeclareMathOperator{\Disc}{Disc}

\DeclareMathOperator{\Nm}{N}
\DeclareMathOperator{\Spec}{Spec}

\title{Noether's problem for $A_6$ and $A_7$}
\author{Federico Scavia}
\address{CNRS, Institut Galil\'ee, Universit\'e Sorbonne Paris Nord, 99 avenue Jean-Baptiste Cl\'ement, 93430 Villetaneuse, France}
\email{scavia@math.univ-paris13.fr}
\date{September 12, 2026}
\subjclass[2020]{Primary 14E08; Secondary 14L30, 13A50, 20D06}
\keywords{Stable rationality, classifying stack, alternating group, Noether problem, quadric}

\begin{document}
	\begin{abstract}
		We prove that $B_kA_6$ is stably rational over every field $k$ in which $2$ and $-3$ are nonzero squares. Combining this with a theorem of Plans, we deduce that $B_kA_7$ is stably rational over every field $k$ of characteristic zero in which $2$ and $-3$ are squares.
	\end{abstract}
	
	\maketitle
	
	\section{Introduction}
	
	The rationality problem for fields of invariants was first posed by Noether
	\cite{Noe13,Noe18}. In its classical form, one lets a finite group $G$
	act by left translation on a set of variables $\{x_g\}_{g\in G}$ and asks
	whether the field
	\[
	k(G)\coloneqq k(x_g:g\in G)^G
	\]
	is rational over the ground field $k$. Today it is also natural to consider the stable birational form of Noether's problem. Recall that, by the no-name lemma, if $V$ and $V'$ are generically free finite-dimensional linear representations of $G$, then the quotients $V/G$ and $V'/G$ are stably birational; see for example \cite[Corollary~3.9]{CTS07}. We denote this common stable birational type by $B_kG$. The stable birational form of Noether's problem asks whether, for some (equivalently, every) faithful linear representation $V$ of $G$, the field $k(V)^G$ is stably rational over $k$.
	
	Noether's problem has a long history and is closely related to generic $G$-extensions and to the inverse Galois problem; see, for example, \cite{Noe18,CTS07}. The answer is negative in general: the first counterexamples to Noether's problem were obtained independently by Swan \cite{Swa69} and Vos\-kre\-senski\u{\i} \cite{Vos70}, who showed that $\mathbb Q(C_{47})$ is not rational over $\mathbb Q$. In fact, $\mathbb Q(C_{47})$ is not even stably rational, by a later result of Voskresenski\u{\i} \cite[Proposition~12]{Vos73}.
    In contrast, by a theorem of Saltman \cite[Theorem 2.1]{Sal82}, $B_{\mathbb Q}C_p$ is retract rational for every prime $p$. A smaller counterexample is $B_{\mathbb Q}C_8$, considered by Endo--Miyata \cite{EM73}, Voskresenski\u{\i} \cite{Vos73}, and Lenstra \cite{Len74}, which is not even retract rational. Saltman \cite{Sal82,Sal84b} gave a particularly simple explanation of this failure using Wang's counterexample to Grunwald's theorem \cite{Wan48}; see \cite[\S\S7--8]{CTS87} for a discussion of the relation between the Grunwald--Wang problem, weak approximation, and retract rationality. 
    
    The first counterexamples to Noether's problem over algebraically closed fields are due to Saltman \cite{Sal84}: for every prime $p$, he constructed a nonabelian $p$-group $G$ of order $p^9$ such that $k(G)$ is not retract
	rational, for $k$ algebraically closed of characteristic different from $p$. His obstruction is the nonvanishing of the unramified Brauer group. Bogomolov \cite{Bog88} subsequently gave a group-theoretic description of this invariant, now called the Bogomolov multiplier of $G$, and used this description to construct smaller counterexamples, with $G$ a $p$-group of order $p^6$.
	
	The natural permutation representations of the alternating groups give one of the most classical cases of Noether's problem. For the symmetric group $S_n$, one has
	\[k(x_1,\ldots,x_n)^{S_n}=k(e_1,\ldots,e_n),\]
	where $x_1,\dots,x_n$ are independent variables permuted by $S_n$, and where $e_1,\dots,e_n$ are the elementary symmetric functions. 
	For the alternating group $A_n$, when $\mathrm{char}(k)\neq 2$, one has
	\[k(x_1,\ldots,x_n)^{A_n}=k(e_1,\ldots,e_n)(\sqrt{\Delta}),\]
	where $\Delta=\Delta(e_1,\dots,e_n)\in k[e_1,\dots,e_n]$ is the discriminant of the polynomial
	\[\prod_{i=1}^n(T-x_i)=T^n-e_1T^{n-1}+\dots+(-1)^ne_n.\]
	Despite this simple-looking description, the rationality problem for $A_n$ is difficult already for small $n$.
    For $n=3$ and $4$, rationality was already known classically. More generally, when $\mathrm{char}(k)=0$, Noether \cite{Noe18} and Seidelmann \cite{Sei16} proved rationality of $k(x_1,\dots,x_n)^G$ for the transitive subgroups $G$ of $S_n$, where $n=3,4$. Kuyk \cite{Kuy60} later extended this to fields of characteristic different from $2$; see also \cite[p. 38]{Kuy64}. The next case was settled by Maeda \cite{Mae89}, who proved that
\[
	\text{$k(x_1,\ldots,x_5)^{A_5}$ is $k$-rational for every field $k$.}
\]
    Plans \cite{Pla09} later proved that, for every odd positive integer $n$, the invariant field of the regular representation $\mathbb Q(A_n)$ is rational over $\mathbb Q(A_{n-1})$; it follows that $k(A_n)/k(A_{n-1})$ is rational for every odd $n$ and every field $k$ of characteristic zero.
	
	All known results for transitive permutation representations of degree $n=6,7$ exclude $A_n$. Kang--Wang \cite{KW14} proved that $\mathbb C(x_1,\ldots,x_7)^G$ is rational for every transitive subgroup $G\subset S_7$ other than possibly $A_7$. In degree six, Kang--Wang--Zhou \cite[Theorem~1.2]{KWZ15} proved that
	$k(x_1,\ldots,x_6)^G$ is rational for every subgroup $G\subset S_6$, with the possible exceptions of groups isomorphic to $PSL_2(\mathbb F_5)$, $PGL_2(\mathbb F_5)$, and $A_6$;  they proved stable rationality over an arbitrary field for the first two groups, but the case of $A_6$ remained open.
	
	Our main result is the following.
	
	\begin{theorem}\label{thm:main}
		Let $k$ be a field with $2,-3\in k^{\times2}$, let $x_1,\dots,x_6$ and $t$ be algebraically independent variables over $k$, and let $A_6$ act on $k(x_1,\ldots,x_6)$ by permutation of the variables. Then $k(x_1,\ldots,x_6)^{A_6}(t)$ is rational over $k$. In particular, $B_kA_6$ is stably rational over $k$.
	\end{theorem}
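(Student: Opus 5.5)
The plan is to pass from the $6$-dimensional permutation representation to a smaller, more rigid model, in four steps.

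\textbf{Step 1: reduce to $\PP(V)/A_6$.} Since $2,-3\in k^{\times 2}$, the characteristic of $k$ is not $2$ or $3$, so $6\in k^\times$. Hence the permutation module splits as $k\cdot(1,\dots,1)\oplus V$, where $V=\{\sum x_i=0\}$ is the $5$-dimensional standard representation. Scalars commute with $A_6$ and $V$ is generically free. Therefore
\[
k(x_1,\dots,x_6)^{A_6}(t)\cong k(\PP(V))^{A_6}(u,v,t)
\]
is purely transcendental of degree $3$ over $k(\PP(V))^{A_6}$. It therefore suffices to show that $k(\PP(V))^{A_6}$ becomes rational after adjoining a few variables.

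\textbf{Step 2: pass to the invariant quadric.} $V$ carries the $A_6$-invariant quadratic form $q=\sum x_i^2$. I would use $q$ to fiber $\PP(V)/A_6$ over a line. Concretely, take the rational map $\PP(V)\dashrightarrow\PP(V)$, or better, intersect with the invariant pencil of quadrics together with an invariant cubic $p_3$ to separate a parameter. The goal is to show that $\PP(V)/A_6$ is birational to $(Q/A_6)\times\A^1$, up to stable equivalence, where $Q=\{q=0\}\subset\PP(V)$ is the smooth quadric threefold. The cleanest version is to project along the rational functions $p_3^2/q^3$, or to use the fact that the lines through a point of $Q$ give $\PP(V)\dashrightarrow Q$ generically. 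The fibre over the generic point is a rational curve whose $A_6$-quotient over the function field I would check to be rational by exhibiting a rational point.

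\textbf{Step 3: use the exceptional isogeny.} $Q$ is the Lagrangian Grassmannian of a $4$-dimensional symplectic space $W$, via $\mathrm{Sp}_4\to\mathrm{SO}_5$. The group $A_6\subset \mathrm{SO}(V,q)$ lifts to $2.A_6\subset\mathrm{Sp}(W)$, and this is where I expect the hypotheses to enter. The $4$-dimensional representation of $2.A_6$ has character values in $\mathbb Q(\sqrt{-3})$ or $\mathbb Q(\sqrt{2})$, and the form $q$ must be split (isotropic of maximal Witt index) for $Q$ to be the Lagrangian Grassmannian over $k$. Both conditions should be guaranteed by $2,-3\in k^{\times2}$, and I would verify this first from the character table. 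Then $Q/A_6$ is birational to a quotient of a homogeneous space of $\mathrm{Sp}(W)$ by $2.A_6$. Using the no-name lemma in the other direction, the stable birational type reduces to $\PP(W)/2.A_6$, or more precisely to $W/2.A_6$ modulo scalars.

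\textbf{Step 4: rationality of the $2.A_6$-quotient (main obstacle).} The crux is to show that $\PP(W)/2.A_6$, or $\PP^3/A_6$ for this action, is stably rational over $k$. I would first try to find explicit low-degree invariants of $2.A_6$ on $W$; the invariant ring starts in low even degree. Then I would fiber $\PP^3$ by an invariant pencil or by the net of invariant quartics/sextics. I would exhibit the generic fibre, over the field of invariants of the base, as a rational surface with a rational point, for instance a del Pezzo surface of degree $\ge 5$ or a conic bundle with a section, and then descend along the quotient. Making this fibration explicit, and checking that the needed rational point is defined over $k(\text{base})$ given only $\sqrt2,\sqrt{-3}\in k$, is where I expect the real work. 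The final assembly then gives rationality of $k(x_1,\dots,x_6)^{A_6}(t)$ after tracking how many free variables were used in Steps 1–3. I hope one suffices, since Step 1 already supplies two spare parameters $u,v$.
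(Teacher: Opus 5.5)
Your proposal has genuine gaps. Only Step~1 is an actual argument, and it agrees with the paper, which likewise reduces to showing that $\PP(V)/A_6\times\A^3_k$ is rational.

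\textbf{Step~2.} It does not work as described. There is no $A_6$-fixed point on $\PP(V)$: an invariant line would be spanned by an invariant vector, since $A_6$ is perfect, and the only invariant vectors of the permutation module are multiples of $(1,\dots,1)\notin V$. So projection from a point of $Q$ is not equivariant. The level sets of $p_3^2/q^3$ are sextic threefolds with no evident relation to $Q$. In the paper the comparison goes through $M=(\PP(V)\times Q)/A_6$. Over $Q/A_6$, the generic fibre is a split $\PP^4$ by the no-name lemma. Over $\PP(V)/A_6$, it is the twisted quadric threefold ${}^TQ$ over $F_0=k(\PP(V))^{A_6}$, which is rational exactly when ${}^TQ(F_0)\neq\emptyset$. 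Producing this point is a key step you do not have. The paper takes a Sylow $2$-subgroup $P\cong D_8$ and uses $\sqrt2,\sqrt{-3}\in k$ to build a $P$-equivariant embedding of $\PP^1$ (with a linear $P$-action) onto a conic in $Q$. Twisting gives a point of ${}^TQ$ over the degree-$45$ extension $k(\PP(V))^P$ of $F_0$, and Springer's theorem descends it to $F_0$. This is where the hypotheses $2,-3\in k^{\times2}$ are used, not in splitness of the quadratic form or in character fields of $2.A_6$.

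\textbf{Steps~3 and~4.} The larger gap is that you never prove rationality of $Q/A_6$, which is the main content of the paper. The reduction in Step~3 to $\PP(W)/A_6$ is unjustified. The no-name lemma does not relate the two homogeneous spaces $Q\cong LG(W)$ and $\PP(W)$. Their natural correspondence, the isotropic flag variety, is a $\PP^1$-bundle over $Q$ whose tautological rank-$2$ bundle is only $2.A_6$-linearized, with the centre acting by $-1$. Its quotient over $Q/A_6$ is therefore a conic bundle with no reason to be split. Step~4 then leaves the rationality of $\PP(W)/A_6$ entirely open; you call it the main obstacle yourself.

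\textbf{What the paper does instead.} It proves $k(Q)^{A_6}$ rational whenever $\mathrm{char}\,k\neq2,3$, by explicit means.
\begin{enumerate}
\item In parameters $p,q,\lambda$ (this $q$ is unrelated to your quadratic form), one has $k(Q)^{A_6}=k(p,q,\lambda)\bigl(\sqrt{\Disc_T(\lambda T^6+pT^3+T^2+T+q)}\bigr)$.
\item Over $F=k(q,p)$, the three finite branch points in $\lambda$ come from the ramification cubic $J_p(u)=6qu^3+5u^2+4u+3p$. This is linear in $p$, so the ramification locus is the graph $C$ of a cubic map $u\mapsto p$.
\item A fibrewise projective change of coordinates $\lambda=N(u)/D(u)$ matches the branch points with $C$. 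A computation shows the cover becomes $\sqrt{q\,j(u)D(u)}$ over $\PP^1_p\times\PP^1_u$, branched along $C+E$ of bidegrees $(1,3)$ and $(1,1)$.
\item Projection to the $u$-line is then a conic bundle with a section, so $Q/A_6$ is rational.
\end{enumerate}
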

	
	In particular, the theorem holds over every algebraically closed field of characteristic not $2$ or $3$. Even though $5$ divides the order of $A_6$, the argument also works in characteristic $5$.
	
	Combining Theorem~\ref{thm:main} with Plans' theorem \cite{Pla09} gives the following consequence.
	
	\begin{corollary}\label{cor:A7}
		Let $k$ be a field of characteristic zero such that
		$2,-3\in k^{\times2}$. Then $B_kA_7$ is stably $k$-rational.
	\end{corollary}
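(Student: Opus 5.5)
The corollary follows by combining Plans' theorem with Theorem~\ref{thm:main} and the no-name lemma. Fix $k$ of characteristic zero with $2,-3\in k^{\times 2}$.

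The first step is to use Plans' theorem \cite{Pla09} for $n=7$. It says that $\mathbb Q(A_7)$ is rational over $\mathbb Q(A_6)$, where both are invariant fields of regular representations. Since $\mathbb Q(A_7)=\mathbb Q(x_g:g\in A_7)^{A_7}$ and $A_6$ has its own regular representation, "over $\mathbb Q(A_6)$" should be read through Plans' construction of an embedding $\mathbb Q(A_6)\hookrightarrow \mathbb Q(A_7)$. The construction is defined over $\mathbb Q$. Base change along $\mathbb Q\subset k$ commutes with taking invariants of a finite group, and purely transcendental extensions stay purely transcendental after extending scalars (compositum with $k$, which is linearly disjoint from these function fields over $\mathbb Q$). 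Hence $k(A_7)$ is rational over $k(A_6)$, which is the statement recorded in the introduction for all fields of characteristic zero. So
\[
k(A_7)\cong k(A_6)(s_1,\dots,s_m)
\]
for some independent variables $s_i$.

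The second step is to apply the no-name lemma (\cite[Corollary~3.9]{CTS07}). The regular representation of $A_6$ and the permutation representation on $x_1,\dots,x_6$ are both faithful, hence generically free. Therefore $k(A_6)$ and $k(x_1,\dots,x_6)^{A_6}$ are stably isomorphic over $k$: after adjoining enough independent variables to each, they become isomorphic. By Theorem~\ref{thm:main}, $k(x_1,\dots,x_6)^{A_6}(t)$ is rational over $k$. Hence $k(A_6)$ is stably rational over $k$.

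Combining the two steps, $k(A_7)$ is purely transcendental over a stably rational field, so it is stably rational over $k$. Since $k(A_7)$ represents $B_kA_7$ (again by the no-name lemma), $B_kA_7$ is stably $k$-rational.

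The only delicate point is the descent from $\mathbb Q$ to $k$ in the first step: one must check that Plans' isomorphism is defined over $\mathbb Q$ and is compatible with extending scalars. The characteristic-zero hypothesis enters only there. The squares hypothesis on $2$ and $-3$ enters only through Theorem~\ref{thm:main}.
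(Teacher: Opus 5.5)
Your proposal is correct and is essentially the paper's argument. The paper also obtains the corollary by combining Theorem~\ref{thm:main} with Plans' theorem, which it records in the introduction as rationality of $k(A_n)/k(A_{n-1})$ for odd $n$ over any field of characteristic zero. Your base change from $\mathbb{Q}$ to $k$ and your no-name comparison between the regular and permutation representations of $A_6$ simply spell out steps the paper leaves implicit.
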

	
	By a theorem of Merkurjev \cite[Theorem~6.1]{Mer20}, $B_kA_n$ is $p$-retract rational for every prime $p$. On the other hand, before the present work it was not even known whether $B_kA_6$ or $B_kA_7$ were retract rational. 
	
	Let $k$ be a number field such that $2,-3\in k^{\times2}$. Theorem~\ref{thm:main} implies that $SL_n/A_6$ is stably rational for any $k$-group embedding $A_6\hookrightarrow SL_n$. We obtain the following arithmetic consequences; see \cite[\S2.3]{BN26} and \cite[Proposition~2.4]{DLAN17}.
	
	\begin{corollary}\label{cor:BM}
		Let $k$ be a number field such that $2,-3\in k^{\times2}$.
		Then $A_6$ satisfies the Brauer--Manin condition for weak
		approximation. In particular, $A_6$ has the tame approximation
		property over $k$: the Grunwald problem for $A_6$ at finite sets of
		places away from those dividing $2,3,5$ has an affirmative answer.
	\end{corollary}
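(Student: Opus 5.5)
The plan is to pass from the birational statement of Theorem~\ref{thm:main} to the arithmetic of a homogeneous space, and then invoke the general results recorded in \cite[\S2.3]{BN26} and \cite[Proposition~2.4]{DLAN17}. First I fix a concrete embedding: the permutation representation of $A_6$ on $k^6$ has determinant $1$, because even permutations have sign $1$. This gives $A_6\hookrightarrow SL_6$, and more generally we may use any $A_6\hookrightarrow SL_n$. Let $X=SL_n/A_6$. I will check that $X$ is stably birational to $B_kA_6$ by the standard no-name argument. Take $V=k^n$ with the induced faithful (hence generically free) $A_6$-action and consider $Y=(SL_n\times V)/A_6$, where $A_6$ acts diagonally.
\begin{itemize}
\item The projection $Y\to SL_n/A_6=X$ is a vector bundle, descended from the trivial bundle by no-name/Hilbert~90, so $Y\bir X\times\A^n$.
\item The projection $Y\to V/A_6$ is generically an $SL_n$-torsor. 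Since $SL_n$ is special, this torsor is generically trivial, so $Y\bir V/A_6\times SL_n$, and $SL_n$ is a rational variety.
\end{itemize}
By Theorem~\ref{thm:main}, $V/A_6$ is stably rational, hence so is $X$.

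Next, for a number field $k$ with $2,-3\in k^{\times2}$, I use that stable rationality is a stable birational invariant implying retract rationality. For a smooth compactification $X^c$ of $X$ this gives two facts:
\begin{itemize}
\item $\mathrm{Br}(X^c)/\mathrm{Br}(k)=0$, since the unramified Brauer group is a stable birational invariant and vanishes for $\PP^m$.
\item $X^c$ satisfies weak approximation. Weak approximation is a stable birational invariant of smooth proper varieties over number fields, by the fibration and implicit function arguments of Colliot-Th\'el\`ene--Sansuc, and it holds for projective space.
\end{itemize}
Hence $X$ satisfies weak approximation, and in particular the Brauer--Manin obstruction (here with trivial unramified Brauer group) is the only one. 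This is precisely the statement that $A_6$ satisfies the Brauer--Manin condition for weak approximation in the sense of \cite[\S2.3]{BN26}. It is independent of the chosen embedding, again by the no-name lemma applied to two embeddings.

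Finally, the tame approximation property follows from \cite[Proposition~2.4]{DLAN17}. Given a finite set $S$ of places, solutions of the Grunwald problem for $A_6$ at $S$ correspond to local points of $X$ at $v\in S$ lifting to global points: the fibres of $SL_n\to X$ over rational points are $A_6$-torsors, and $H^1(k_v,SL_n)=0$ surjects local torsors onto $X(k_v)/SL_n(k_v)$. Weak approximation on $X$ then produces a global point in any prescribed open set of $\prod_{v\in S}X(k_v)$, which gives a global $A_6$-extension with the prescribed local completions. The restriction to places away from $2,3,5$ is exactly the tame range in \cite{DLAN17}. Under the full weak approximation established above one even gets the Grunwald property at all places, but I will only claim what the cited proposition gives.

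The main obstacle is the first step, identifying $SL_n/A_6$ with $B_kA_6$ stably birationally. It needs care with generic freeness and with the speciality of $SL_n$; the rest consists of quotations.
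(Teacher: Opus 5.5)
Your proposal is correct and follows the paper's route: Theorem~\ref{thm:main} gives stable rationality of $SL_n/A_6$ for any embedding $A_6\hookrightarrow SL_n$. Your no-name and speciality-of-$SL_n$ argument is exactly the standard justification the paper leaves implicit, and the arithmetic conclusions are then read off from \cite[\S2.3]{BN26} and \cite[Proposition~2.4]{DLAN17}, as in the paper.

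Your remark that full weak approximation on $SL_n/A_6$ even gives the Grunwald property at every finite set of places is also right. Two small corrections to that step. The relevant map is $X(k_v)/SL_n(k_v)\to H^1(k_v,A_6)$, which is bijective because $H^1(k_v,SL_n)=1$; you stated the direction the other way round. You also need openness of the $SL_n(k_v)$-orbits in $X(k_v)$ to pass from weak approximation to prescribed local torsors.
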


	We outline the proof of Theorem~\ref{thm:main}. Let
	\[
	W=\{(x_1,\ldots,x_6)\in k^6:\textstyle\sum_i x_i=0\},
	\qquad
	Q=\{[x]\in\PP_k(W):\textstyle\sum_i x_i^2=0\}.
	\]
	The variety $Q\subset\PP^4_k$ is a smooth quadric threefold which is preserved by the permutation action of $S_6$. Over $\mathbb C$, this is the $A_6$-quadric considered in
	\cite[Example~1.14]{CS14}.
	
	We will prove Theorem~\ref{thm:main} as a consequence of the following theorem.
	
	\begin{theorem}\label{thm:quadric}
		Let $k$ be a field of characteristic not equal to $2$ or $3$. The field extension $k(Q)^{A_6}/k$ is rational.
	\end{theorem}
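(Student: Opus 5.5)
We will prove rationality of $k(Q)^{A_6}$ by exhibiting an $A_6$-equivariant birational model of $Q$ whose quotient can be computed directly.

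\textbf{Step 1: reduce to the Lagrangian Grassmannian.} The natural first step is the classical identification of the smooth quadric threefold $Q\subset\PP(W)$ with the Lagrangian Grassmannian $LG(2,4)$ of a $4$-dimensional symplectic space. Under this identification, the orthogonal group $SO(W,q)\cong SO_5$ is the quotient of $Sp_4$ by $\pm1$. The exceptional isomorphism $A_6\cong PSL_2(\mathbb F_9)$, together with the $4$-dimensional spin representation of $2.A_6$, shows that the action of $A_6$ on $Q$ comes from a projective action on $\PP^3=\PP(V)$ preserving a symplectic form. Lines on $Q$ then correspond to points of $\PP^3$. The first step is to work on the incidence variety
\[
F=\{(p,\ell): p\in\ell\subset Q\}.
\]
This variety is a $\PP^1$-bundle over both $Q$ and $\PP^3$. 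In fact $F\to\PP^3$ is the bundle of lines through a point $v$ lying in $v^\perp$, and $F\to Q$ is the $\PP^1$-bundle of isotropic lines in a Lagrangian plane.

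A subtlety is that the spin lift may only be defined over an extension, or may be obstructed. Its Brauer class is the obstruction, and I expect it to vanish, or to be avoidable, using $\mathrm{char}\neq2,3$. This is the point where the hypotheses on $k$ will enter.

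\textbf{Step 2: descend the projective bundles to the quotients.} The goal is to show that $k(F)^{A_6}$ is rational over both $k(Q)^{A_6}$ and $k(\PP^3)^{A_6}$. By the no-name lemma for projective bundles, a $\PP^1$-bundle $\PP(E)\to X$ with a generically free group action descends to a Severi--Brauer variety over $X/G$. It is rational over $X/G$ whenever it has a rational section, or whenever $E$ is $G$-linearized. For $F\to Q$ the relevant bundle is the restriction of the tautological rank-2 bundle on $LG(2,4)$. This bundle is only $2.A_6$-linearized, so one must check that the associated conic over $k(Q)^{A_6}$ splits. One way to do this is to exhibit an $A_6$-invariant rational section, for example from a distinguished invariant hypersurface, such as the $A_6$-invariant sextic or the orbit structure coming from the Valentiner-type invariants of degree $6$.

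\textbf{Step 3: compare the two quotients.} If Step 2 works, then $k(Q)^{A_6}(t)\cong k(\PP^3)^{A_6}(s)$. This alone only gives stable statements, which is exactly the form of Theorem~\ref{thm:main}. To get rationality of $k(Q)^{A_6}$ itself, I would instead look for a direct construction. The plan is to choose an $A_6$-invariant birational fibration of $Q$, for instance by projecting from a suitable $A_6$-orbit or using the pencil of invariant quartics/sextics restricted to $Q$. This should yield a morphism $Q/A_6\dashrightarrow\PP^1$ or $\PP^2$ whose generic fiber is a rational curve or surface with a rational point.

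Concretely, I would use the invariants of degrees $2,3,4,5,6$ of $S_6$ acting on $W$, together with the $A_6$ anti-invariant. Restricting these to $q=0$ produces generators of $k(Q)^{A_6}$, namely ratios of invariants of equal degree. The key step is to find a presentation as a function field of a hypersurface with an evident rational parametrization. For example, I expect one relation expressing $\sqrt{\Delta}$-type data in which some generator appears linearly, so that it can be eliminated.

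\textbf{Main obstacle.} The hard step is this final explicit elimination. Computing $k(W)^{A_6}$ restricted to the cone $q=0$ and dehomogenizing gives a degree-$0$ field. This field is a quadratic extension of $k(Q)^{S_6}$, generated by the square root of the discriminant. The difficulty is to show that the resulting double cover of the rational variety $Q/S_6$ is itself rational, by finding coordinates in which the branch equation becomes linear in one variable or becomes a norm form that can be trivialized. I would attempt this first via a Gröbner/explicit computation with the power sums $p_3,p_4,p_5,p_6$ on $p_1=p_2=0$, looking for an invariant combination making the discriminant a square times a linear factor.
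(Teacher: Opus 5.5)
\textbf{There is a genuine gap: the proposal does not prove the statement.} The one step with real content is deferred to a hoped-for Gr\"obner computation.

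Steps 1--3 cannot replace that step, for three reasons.
\begin{enumerate}
\item Even granting them, the incidence correspondence only gives $k(Q)^{A_6}(t)\cong k(\PP^3)^{A_6}(s)$. That is a stable statement relative to a quotient whose rationality is no better understood, and you never check that the conic over $k(Q)^{A_6}$ splits.
\item Step 1 is not available over a general $k$ in the statement. For $k=\mathbb{R}$ the form $\sum x_i^2$ is positive definite on $W$, so $Q(k)=\emptyset$. Hence $Q\not\cong LG(2,4)$ over $k$, and your $\PP^3$ is only a twisted form.
\item The hypothesis $\mathrm{char}(k)\neq 2,3$ cannot remove that obstruction, and it plays no such role. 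It enters only through elementary divisibility and separability, e.g.\ dividing by $3$ and $6$ in the ramification cubic.
\end{enumerate}
What remains is exactly your ``main obstacle'': showing that the double cover of $Q/S_6\bir\PP(3,4,5,6)$ branched along the discriminant is rational. For this you give neither coordinates nor a mechanism.

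\textbf{The missing idea} is a specific choice of coordinates followed by a fiberwise (de Jonqui\`eres) change of variable.
\begin{enumerate}
\item On the cone $e_1=e_2=0$, use the scaling to normalize the sextic to $\lambda T^6+pT^3+T^2+T+q$, with $\lambda=e_5^4/e_4^5$, $p=e_3e_5/e_4^2$, $q=e_4e_6/e_5^2$. Then $k(Q)^{S_6}=k(p,q,\lambda)$. With $K=k(q)$ and $T=1/U$, one gets $k(Q)^{A_6}=K(p,\lambda)\bigl(\sqrt{\Disc_U(H_p(U)+\lambda)}\bigr)$, where $H_p=qU^6+U^5+U^4+pU^3$.
\item Now $\lambda$ is the value parameter of $u\mapsto -H_p(u)$. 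So the discriminant is, up to squares, $-q$ times the norm of $\lambda-r$, with $r$ running over the three critical values.
\item Since $p$ sits in the lowest nonconstant term, the nonzero critical points satisfy $6qu^3+5u^2+4u+3p=0$. This is linear in $p$, so the ramification curve is the graph of a cubic $u\mapsto p$.
\item Over $F=K(p)$, substitute $\lambda=N(u)/D(u)$, the unique M\"obius transformation sending the three critical points to their critical values.
\item A norm computation turns the cover into $K(p,u)\bigl(\sqrt{q\,j(u)D(u)}\bigr)$. Here $j=J_p/(6q)$ and $D$ both have degree $1$ in $p$. For $D$ this needs the degree bounds obtained by reducing $h=-H_{f(u)}(u)$ modulo $j$, in particular $\deg\alpha=1$.
\item Over $K(u)$ this is a conic with a rational point, so the field is $K$-rational. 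Equivalently, $qjD=qj^2\cdot(D/j)$ with $D/j$ a M\"obius coordinate in $p$, which realizes your hoped-for ``square times linear factor''.
\end{enumerate}
The construction works because the leftover square class is the constant $q\in K$, not a function of $p$ that would add branching along fibers. None of this comes out of the Lagrangian-Grassmannian picture.
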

	
	Assume for the moment that Theorem~\ref{thm:quadric} has been proved, and consider
	\[
	M\coloneqq (\PP(W)\times Q)/A_6,
	\]
	where the quotient is taken with respect to the diagonal action. The two projections induce rational maps
	\[
	M\dashrightarrow Q/A_6,
	\qquad
	M\dashrightarrow \PP(W)/A_6.
	\]
	The generic fiber of the first map is a projective space over $k(Q)^{A_6}$, and the generic fiber of the second map is a twist $\mathcal{Q}$ of the quadric $Q$ over $k(\PP_k(W))^{A_6}$. We show that the smooth quadric $\mathcal{Q}$ acquires a rational point over the odd-degree extension $k(\PP_k(W))^{P}/k(\PP_k(W))^{A_6}$, where $P$ is a Sylow $2$-subgroup of $A_6$; here we use the assumption $2,-3\in k^{\times2}$ to construct a $P$-equivariant morphism $\PP^1_k\to Q$, where $P$ acts linearly on $\PP^1_k$. Springer's theorem then allows us to descend the point to $k(\PP_k(W))^{A_6}$. Thus $\mathcal{Q}$ is rational over $k(\PP_k(W))^{A_6}$, and comparison of the two projections yields Theorem~\ref{thm:main}.
	
	We now discuss the proof of Theorem~\ref{thm:quadric}. Since $\sum_i x_i^2=e_1^2-2e_2$, the affine cone over $Q$ is cut out by $e_1=e_2=0$. Therefore, the quotient $Q/S_6$ is birationally equivalent to the weighted projective space $\PP(3,4,5,6)$, and the intermediate quotient $Q/A_6$ is the discriminant double cover. In Proposition~\ref{prop:coefficientchart}, we choose independent parameters $p,q,\lambda$ over $k$ so that
	\[
	k(Q)^{A_6}
	=
	k(p,q,\lambda)
	\left(
	\sqrt{
		\Disc_T(\lambda T^6+pT^3+T^2+T+q)}
	\right).
	\]
	(See Remark~\ref{rem:why-pql} for motivation for this choice of the parameters $p,q,\lambda$.) 
	Letting $K\coloneqq k(q)$, we will prove the stronger assertion that $k(Q)^{A_6}$ is rational over $K$. After the change of variables $U=1/T$, Proposition~\ref{prop:surfacepencil} shows that
	\[k(Q)^{A_6}=K(p,\lambda)\left(\sqrt{\Disc_U(H_p(U)+\lambda)}\right), \quad H_p(U)=qU^6+U^5+U^4+pU^3.\]
	Let
	\[
	Y\longrightarrow \PP^1_{p,K}\times_K\PP^1_{\lambda,K}
	\]
	be a birational model of this double cover. If either projection exhibited
	$Y$ as a split conic bundle over a rational curve, then $Y$ would be
	$K$-rational. However, the generic fibers of the two projections are not conics: over the $p$-line one obtains a double cover of $\PP^1_\lambda$ branched at $4$ points, hence a curve of genus $1$, while the generic fiber over the $\lambda$-line is a double cover of
	$\PP^1_p$ branched at $6$ points, hence a curve of genus $2$. We will prove Theorem~\ref{thm:quadric} by making a suitable change of coordinates \[K(p,\lambda)=K(p,u)\] after which the resulting birational model has the structure of a split conic bundle over the $u$-line, and hence is $K$-rational. Such fiberwise changes of coordinates are classical;
	Serre \cite[p.~76, \S1.1(c)]{Ser10} refers to them as de Jonqui\`eres transformations.
	
	Let $F\coloneqq K(p)$. Over the generic point of the $p$-line, the fiber of $Y$ is the degree-$2$ cover $Y_F\to\PP^1_{\lambda,F}$ corresponding to the quadratic extension
	\[
	F(\lambda)\subset
	F(\lambda)\left(
	\sqrt{\Disc_U(H_p(U)+\lambda)}
	\right).
	\]
	Consider also the degree-$6$ morphism
	\[
	\varphi_F\colon\PP^1_{u,F}\longrightarrow\PP^1_{\lambda,F},
	\qquad
	u\longmapsto-H_p(u).
	\]
	Since 
	\[
	H_p'(u)=u^2J_p(u),
	\qquad
	J_p(u)\coloneqq 6qu^3+5u^2+4u+3p,
	\]
	the branch points of $Y_F\to\PP^1_{\lambda,F}$ are the three values
	\[
	\lambda=\varphi_F(u)=-H_p(u),
	\qquad
	J_p(u)=0,
	\]
	together with $\lambda=\infty$. (The ramification at $u=0$ contributes the square factor $\lambda^2$ to the discriminant and therefore does not give a branch point of the quadratic
	cover.) 
	
	Let $C\subset \PP^1_{p,K}\times_K\PP^1_{u,K}$ be the closure of the locus $J_p(u)=0$. Since the equation $J_p(u)=0$ is linear in $p$, the curve $C$ is the graph of a degree-$3$ morphism
	$\PP^1_{u,K}\to\PP^1_{p,K}$. Let $C_F\coloneqq C\times_{\PP^1_{p,K}}\Spec F\subset\PP^1_{u,F}$. Then $C_F$ is finite \'etale of degree $3$, and its three geometric points have distinct images	under $\varphi_F$; see Lemmas~\ref{lem:ramificationcurve} and \ref{lem:ramificationimages}. Hence there is a unique projective isomorphism
	\[
	\Phi\colon\PP^1_{u,F}\xlongrightarrow{\sim}\PP^1_{\lambda,F}
	\]
	whose restriction to $C_F$ agrees with $\varphi_F$.
	
	We pull back the extension $K(Y)/F(\lambda)$ along the induced isomorphism $\Phi^*\colon F(\lambda)\xrightarrow{\sim}F(u)$. Taking the normalization of $\PP^1_{p,K}\times_K\PP^1_{u,K}$ in the new quadratic extension gives a double cover
	\[
	\pi\colon Z\longrightarrow \PP^1_{p,K}\times_K\PP^1_{u,K}.
	\]
	Over the generic point $\Spec F$ of $\PP^1_{p,K}$, the cover $\pi$ is branched precisely along
	$C_F$ and the point $\Phi^{-1}(\infty)$. If $j(u)=0$ defines $C_F$ and $D(u)=0$ defines $\Phi^{-1}(\infty)$, then
	\[K(Z)=F(u)\left(\sqrt{w(p)\,j(u)D(u)}\right)
	\]
	for some $w(p)\in F^\times$, well-defined modulo squares. Letting $E\subset \PP^1_{p,K}\times_K\PP^1_{u,K}$ be the closure of the locus $D(u)=0$, we see that $C$ and $E$ have bidegrees $(1,3)$ and $(1,1)$, respectively, and have no common irreducible component.
	
	Since $C+E$ has bidegree $(2,4)=2(1,2)$, there exist double covers of $\PP^1_{p,K}\times_K\PP^1_{u,K}$ with branch divisor $C+E$. However, it is not clear why $\pi$ should be one of them: indeed,  $\pi$ may also branch where $w(p)$ has odd valuation. It turns out that the square class of $w(p)$ is in fact constant, that is, it comes from $K$. More precisely, Proposition~\ref{prop:exactcover} shows that $w(p)=q$ in $F^{\times}/F^{\times 2}$. Thus
	\[K(Z)=K(p,u)\left(\sqrt{qj(u)D(u)}\right),\]
	and the branch divisor of $\pi$ is $C+E$. The proof of Proposition~\ref{prop:exactcover} is by explicit algebraic manipulations; I do not know a geometric explanation for why the branch divisor of $\pi$ is exactly $C+E$.
	
	Consider the projection $Z\to \PP^1_{u,K}$. Since $C$ and $E$ have bidegrees $(1,3)$ and $(1,1)$, respectively, the generic fiber is a degree-$2$ cover of $\PP^1_{p,K(u)}$ branched at exactly two $K(u)$-rational points. These points are distinct because $C$ and $E$ have no common component. Thus the generic fiber is a smooth conic over $K(u)$. Moreover, the ramification point lying above $C$ is $K(u)$-rational, since $C$ has degree one over the $u$-line. Hence the generic conic has a rational point and is therefore $K(u)$-rational. It follows that $Z$ is $K$-rational. Since $Z$ is birational to $Y$, Proposition~\ref{prop:surfacepencil} now gives $k(Q)^{A_6}=K(Y)=K(Z)$, which proves Theorem~\ref{thm:quadric}.

	\section*{Acknowledgements}
	
	I thank my former advisor Zinovy Reichstein for introducing me to this problem when I was a PhD student.
	
	I have used a publicly available large language model and a computer algebra system to assist in determining the correct formulation of Lemma~\ref{lem:poledegrees} and Proposition~\ref{prop:exactcover}.

	\section{Proof of Theorem~\ref{thm:quadric}}
	
	\subsection{The sextic pencil}\label{sec:coefficients}
	
	Let $k$ be a field of characteristic not equal to $2$ or $3$. As in the
	Introduction, let
	\[
	W=\{(x_1,\ldots,x_6)\in k^6:\textstyle\sum_i x_i=0\},
	\qquad
	Q=\{[x]\in\PP_k(W):\textstyle\sum_i x_i^2=0\}.
	\]
	
	\begin{proposition}\label{prop:coefficientchart}
		There exist algebraically independent elements
		$p,q,\lambda$ of $k(Q)^{S_6}$ such that
		\[
		k(Q)^{S_6}=k(p,q,\lambda)
		\]
		and
		\[
		k(Q)^{A_6}
		=
		k(p,q,\lambda)
		\left(
		\sqrt{
			\Disc_T(\lambda T^6+pT^3+T^2+T+q)}
		\right).
		\]
	\end{proposition}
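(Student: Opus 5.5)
We will realize $Q/S_6$ as the weighted projective space $\PP(3,4,5,6)$ and then renormalize the universal sextic whose roots are the coordinates. Throughout, $\delta=\prod_{i<j}(x_i-x_j)$ denotes the Vandermonde product.

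\emph{Step 1: the affine cone.} Let $\widehat Q\subset W$ be the affine cone over $Q$, cut out in $\A^6$ by $e_1=e_2=0$; this uses $\sum x_i^2=e_1^2-2e_2$ and $2\in k^\times$. It is an irreducible complete intersection of dimension $4$. Consider the finite morphism $\widehat Q\to\A^4$, $x\mapsto(e_3,e_4,e_5,e_6)$. It is surjective, because every polynomial $T^6-e_3T^3+e_4T^2-e_5T+e_6$ splits over $\bar k$. Its generic fiber is the set of orderings of the roots of the generic polynomial
\[
f(T)=T^6-e_3T^3+e_4T^2-e_5T+e_6 .
\]
This polynomial is separable: its discriminant is a nonzero polynomial in $e_3,\dots,e_6$, as one checks on a specialization, and this is where $\mathrm{char}\,k\neq2,3$ is used. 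Hence $k(\widehat Q)/k(e_3,\dots,e_6)$ is Galois with group $S_6$, and
\[
k(\widehat Q)^{S_6}=k(e_3,e_4,e_5,e_6),\qquad k(\widehat Q)^{A_6}=k(e_3,\dots,e_6)(\delta),
\]
with $\delta^2=\Disc_T f$. I will argue via the degree $[k(\widehat Q):k(e_3,\dots,e_6)]=720$ and faithfulness of the action, rather than through invariants of the graded ring. The ring route would be delicate in characteristic $5$, since $5$ divides $|S_6|$.

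\emph{Step 2: passing to degree zero.} The field $k(Q)$ is the degree-$0$ part of $k(\widehat Q)$ for the grading $\deg x_i=1$, and this grading commutes with the $S_6$-action. Pick any $S_6$-invariant homogeneous element $c$ of degree $1$ in $k(e_3,\dots,e_6)$. Then the degree-$0$ part of $k(e_3,\dots,e_6)$ is generated by $e_i/c^i$ for $3\le i\le 6$. Likewise $k(Q)^{A_6}$ is obtained from it by adjoining $\delta/c^{15}$: any $A_6$-invariant degree-$0$ element can be written as $a+b\,\delta$ with $a,b\in k(e_3,\dots,e_6)$ homogeneous of degrees $0$ and $-15$.

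\emph{Step 3: the choice of $c$.} Take $c=-e_5/e_4$ and set
\[
g(T)=\frac{f(cT)}{e_4c^2}=\lambda T^6+pT^3+T^2+T+q,
\]
where
\[
\lambda=\frac{c^4}{e_4},\qquad p=-\frac{e_3c}{e_4},\qquad q=\frac{e_6}{e_4c^2}.
\]
The coefficient of $T$ equals $1$ precisely by the choice of $c$. All three elements are homogeneous of degree $0$ and $S_6$-invariant. Inverting these relations gives
\[
\frac{e_4}{c^4}=\frac1\lambda,\qquad \frac{e_5}{c^5}=-\frac1\lambda,\qquad \frac{e_3}{c^3}=-\frac p\lambda,\qquad \frac{e_6}{c^6}=\frac q\lambda .
\]
By Step 2 this shows $k(Q)^{S_6}=k(p,q,\lambda)$. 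Since this field has transcendence degree $3=\dim Q$, the elements $p,q,\lambda$ are algebraically independent.

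\emph{Step 4: the discriminant.} Substituting $T\mapsto cT$ multiplies a degree-$6$ discriminant by $c^{30}$. Dividing the polynomial by a constant $a$ multiplies its discriminant by $a^{-10}$. Hence
\[
\Disc_T g=\Disc_T f\cdot c^{30}(e_4c^2)^{-10}=\left(\frac{\delta}{c^{15}}\right)^2\cdot\left(\frac{c^{30}}{e_4^{5}c^{10}}\right)^2 .
\]
Here the second factor is a square of a degree-$0$ element of $k(p,q,\lambda)$. Therefore $\sqrt{\Disc_T g}$ differs from $\delta/c^{15}$ by a factor in $k(p,q,\lambda)^\times$, and Step 2 gives the formula for $k(Q)^{A_6}$.

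The main obstacle is Step 1, specifically in characteristic $5$. One must verify that $\widehat Q$ is irreducible and that $\Disc f\not\equiv0$, so that the degree and Galois argument goes through. I would check both by exhibiting one explicit separable specialization, for instance with $e_3=e_5=0$.
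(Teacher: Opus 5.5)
Your proposal is correct and is essentially the paper's proof. Your $c=-e_5/e_4$ is the paper's $s$, your $\lambda,p,q$ equal its $e_5^4/e_4^5$, $e_3e_5/e_4^2$, $e_4e_6/e_5^2$, the passage to degree $0$ and the rescaling $\Disc_T g=\lambda^{10}(\delta/c^{15})^2$ match its computation, and the only difference is that you justify $k(\widehat Q)^{S_6}=k(e_3,\dots,e_6)$ and $k(\widehat Q)^{A_6}=k(e_3,\dots,e_6)(\delta)$ by a degree count and Artin's theorem where the paper cites Smith.

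One small correction: a separable specialization does not prove that $\widehat Q$ is irreducible; that follows instead from $\sum_i x_i^2$ being nondegenerate on $W$ when $\mathrm{char}\,k\neq 2,3$. Also, the generic discriminant of $f$ is nonzero in every characteristic, so that is not where the hypothesis on $\mathrm{char}\,k$ enters.
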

	
	See Remark~\ref{rem:why-pql} for motivation for the choice of $p,q,\lambda$.
	
	\begin{proof}
		Let $\widehat Q\subset W$ be the affine cone over $Q$, and let
		$e_1,\dots,e_6$ be the elementary symmetric functions in
		$x_1,\dots,x_6$. Since $\sum_i x_i^2=e_1^2-2e_2$, the equations of $\widehat Q$ are $e_1=e_2=0$. Thus the polynomial with roots $x_1,\dots,x_6$ is 
		\[
		P(T)=T^6-e_3T^3+e_4T^2-e_5T+e_6.
		\]
		
		Set
		\[
		\lambda\coloneqq\frac{e_5^4}{e_4^5},
		\qquad
		p\coloneqq\frac{e_3e_5}{e_4^2},
		\qquad
		q\coloneqq\frac{e_4e_6}{e_5^2},
		\qquad
		s\coloneqq-\frac{e_5}{e_4}.
		\]
		Then
		\[
		e_3=-\frac{ps^3}{\lambda},
		\qquad
		e_4=\frac{s^4}{\lambda},
		\qquad
		e_5=-\frac{s^5}{\lambda},
		\qquad
		e_6=\frac{qs^6}{\lambda},
		\]
		so
		\[
		k(e_3,e_4,e_5,e_6)=k(p,q,\lambda,s).
		\]
		The scalar action on $\widehat Q$ sends $e_i$ to $t^ie_i$. Thus it
		fixes $p,q,\lambda$ and sends $s$ to $ts$. Since $k(\widehat Q)^{S_6}=k(e_3,e_4,e_5,e_6)$,
		taking invariants under the scalar action yields $k(Q)^{S_6}=k(p,q,\lambda)$.	
		
		Define $\varepsilon\coloneqq\prod_{i<j}(x_i-x_j)$. Then
		\[k(\widehat Q)^{A_6}=k(e_3,e_4,e_5,e_6)(\varepsilon),\qquad \varepsilon^2=\Disc_T(P)\in k(e_3,e_4,e_5,e_6);\]
		see for example \cite[Proposition~1.3.1 and Corollary~1.3.2]{Smi95}. Since $\varepsilon$ has weight $15$, the element $\eta\coloneqq \varepsilon/s^{15}$ is invariant under the scalar action, and hence $k(Q)^{A_6}=k(p,q,\lambda)(\eta)$.
		
		Finally, recall that for a polynomial of degree $6$, the discriminant satisfies
		\[
		\Disc(F(sV))=s^{30}\Disc(F),
		\qquad
		\Disc(cF)=c^{10}\Disc(F).
		\] 
		Therefore, since $P(sV)=(s^6/\lambda)(\lambda V^6+pV^3+V^2+V+q)$, 
		we have 
		\[
		s^{30}\Disc_T(P)
		=
		\frac{s^{60}}{\lambda^{10}}
		\Disc_V(\lambda V^6+pV^3+V^2+V+q),
		\]
		that is,
		\[
		\eta^2
		=
		\lambda^{-10}
		\Disc_V(\lambda V^6+pV^3+V^2+V+q),
		\]
		from which the conclusion follows.
	\end{proof}

	Set $K\coloneqq k(q)$, and define
	\[
	H_p(U)\coloneqq qU^6+U^5+U^4+pU^3\in K[p][U].
	\]
	Let $Y$ be the normalization of
	$\PP^1_{p,K}\times_K\PP^1_{\lambda,K}$ in the quadratic extension
	\[
	K(p,\lambda)
	\subset
	K(p,\lambda)\left(
	\sqrt{
		\Disc_T(\lambda T^6+pT^3+T^2+T+q)}
	\right)
	\]
	of Proposition~\ref{prop:coefficientchart}. We have a finite degree-two morphism
	\[
	Y\longrightarrow\PP^1_{p,K}\times_K\PP^1_{\lambda,K}.
	\]
	
	\begin{proposition}\label{prop:surfacepencil}
		We have
		\[
		k(Q)^{A_6}=K(Y)=
		K(p,\lambda)
		\left(
		\sqrt{\Disc_U(H_p(U)+\lambda)}
		\right).
		\]
	\end{proposition}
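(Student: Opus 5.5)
The first equality is essentially definitional. Proposition~\ref{prop:coefficientchart} says $k(Q)^{A_6}$ is the quadratic extension of $k(p,q,\lambda)=K(p,\lambda)$ obtained by adjoining a square root of $\Disc_T(\lambda T^6+pT^3+T^2+T+q)$. By construction $Y$ is the normalization of $\PP^1_{p,K}\times_K\PP^1_{\lambda,K}$ in exactly this extension, so its function field is that extension, and $k(Q)^{A_6}=K(Y)$. (One should check that the discriminant is not a square in $K(p,\lambda)$, but this is automatic: the extension $k(Q)^{A_6}/k(Q)^{S_6}$ has degree $2$, since $A_6$ has index $2$ in $S_6$ and acts faithfully on $Q$.)

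For the second equality I would compare the two discriminants directly via the substitution $T=1/U$, i.e.\ by reversing the polynomial. Write $G(T)=\lambda T^6+pT^3+T^2+T+q$. Its reversal is
\[
U^6G(1/U)=qU^6+U^5+U^4+pU^3+\lambda=H_p(U)+\lambda .
\]
For a polynomial of formal degree $6$, reversal preserves the discriminant, because $\Disc$ is symmetric under $(a_0,\dots,a_6)\mapsto(a_6,\dots,a_0)$. One way to see this is via the root formula: write $\Disc(G)=a_6^{10}\prod_{i<j}(r_i-r_j)^2$, note that the reversed polynomial has roots $1/r_i$ and leading coefficient $a_0=a_6\prod_i r_i$, and use $(1/r_i-1/r_j)^2=(r_i-r_j)^2/(r_ir_j)^2$; the factors $\prod_i r_i$ then cancel exactly. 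This identity holds over the generic universal coefficients, hence as a polynomial identity in $\mathbb Z[a_0,\dots,a_6]$, and so it specializes. Since $\lambda\neq 0$ and $q\neq 0$ in $K(p,\lambda)$, both polynomials genuinely have degree $6$, and we get
\[
\Disc_T(G)=\Disc_U(H_p(U)+\lambda)
\]
in $K(p,\lambda)$. Adjoining square roots of equal elements gives the same extension, which proves the claim.

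The only potential obstacle is sign and normalization conventions for $\Disc$, but any sign discrepancy is even-degree invariant here (the formula above gives exact equality), so nothing beyond the routine check is needed.
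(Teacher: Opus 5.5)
Your proof is correct and follows the paper's argument. The first equality is read off from Proposition~\ref{prop:coefficientchart} and the definition of $Y$. The second comes from the reversal $U^6\bigl(\lambda U^{-6}+pU^{-3}+U^{-2}+U^{-1}+q\bigr)=H_p(U)+\lambda$ together with the invariance of the discriminant when the coefficients are reversed.

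The only difference is that the paper cites Gelfand--Kapranov--Zelevinsky for this invariance, whereas you verify it directly with the root formula. You also add a check that the discriminant is a nonsquare, which the paper leaves implicit; it is harmless.
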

	
	\begin{proof}
		By Proposition~\ref{prop:coefficientchart} and the definition of $Y$, we have
		\[
		k(Q)^{A_6}=K(Y)
		=
		K(p,\lambda)
		\left(
		\sqrt{
			\Disc_T(\lambda T^6+pT^3+T^2+T+q)}
		\right).
		\]
		Replacing $T$ by $U^{-1}$ and multiplying by $U^6$ gives
		\[
		H_p(U)+\lambda=qU^6+U^5+U^4+pU^3+\lambda.
		\]
		It now follows from \cite[p.~404, (1.26)]{GKZ94} that \[\Disc_T(\lambda T^6+pT^3+T^2+T+q)=\Disc_U(H_p(U)+\lambda),\] completing the proof.
	\end{proof}

	\subsection{The ramification curve}
	\label{sec:ramification}
	
	Consider the morphism
	\[
	\varphi\colon\A^1_{p,K}\times_K\PP^1_{u,K}\longrightarrow
	\A^1_{p,K}\times_K\PP^1_{\lambda,K},\qquad
	(p,u)\longmapsto(p,-H_p(u)).
	\]
	For $p_0\in\A^1_{p,K}$, write $\varphi_{p_0}$ for the induced degree-six
	morphism
	\[\varphi_{p_0}\colon\PP^1_{u,K}\longrightarrow\PP^1_{\lambda,K},\qquad u\longmapsto-H_{p_0}(u).\]
	Differentiation gives
	\begin{equation}\label{eq:ramificationcubic}
		H_p'(U)=U^2J_p(U),\qquad J_p(U)=6qU^3+5U^2+4U+3p.
	\end{equation}
	Thus, for the fiber over $p_0$, the ramification points other than $u=0$ and $u=\infty$ are the roots of $J_{p_0}(U)=6qU^3+5U^2+4U+3p_0$.
	
	\begin{lemma}\label{lem:ramificationcurve}
		Define 
		\[C\coloneqq \overline{\{(p,u): J_p(u)=0\}}\subset \PP^1_{p,K}\times_K\PP^1_{u,K}.\]
		\begin{enumerate}
			\item Consider the morphism \[f\colon\PP^1_{u,K}\longrightarrow\PP^1_{p,K},\qquad
			f(u)=-2qu^3-\frac53u^2-\frac43u.\]
			Then $f$ is separable of degree $3$, and $C=\Gamma_f$ is the graph of $f$. In particular, the divisor $C\subset \PP^1_{p,K}\times_K\PP^1_{u,K}$ is irreducible and has bidegree $(1,3)$.
			\item The restriction of $\varphi$ to $C$ induces a rational map $h\colon \PP^1_{u,K}\dashrightarrow\PP^1_{\lambda,K}$,
			where we identify $C\simeq\PP^1_{u,K}$ via
			$\operatorname{pr}_u|_C$. This rational map extends to a morphism
			\[
			h\colon \PP^1_{u,K}\longrightarrow\PP^1_{\lambda,K},
			\qquad
			h(u)=qu^6+\frac{2}{3}u^5+\frac{1}{3}u^4.
			\]
		\end{enumerate}
	\end{lemma}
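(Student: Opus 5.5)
Both parts are direct computations, and I would do them in order.

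For (1), solve $J_p(u)=0$ for $p$. Since $3$ is invertible in $k$, the relation $6qu^3+5u^2+4u+3p=0$ gives
\[
p=-2qu^3-\tfrac53u^2-\tfrac43u=f(u).
\]
So the affine locus $\{J_p(u)=0\}\subset\A^1_p\times\A^1_u$ is the graph of the polynomial map $f$. Its closure in $\PP^1_{p,K}\times_K\PP^1_{u,K}$ is the graph $\Gamma_f$ of the extension $f\colon\PP^1_u\to\PP^1_p$ with $f(\infty)=\infty$, because a graph is closed and irreducible and contains the affine locus as a dense open subset.

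The degree of $f$ is $3$ because $q$ is transcendental over $k$, so the leading coefficient $-2q$ is nonzero (here $\mathrm{char}\,k\neq2$). Separability follows from $f'(u)=-6qu^2-\frac{10}{3}u-\frac43$, which is a nonzero polynomial because its constant term is $-\frac43\neq0$.

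For the bidegree, $\Gamma_f\simeq\PP^1_u$ is irreducible. It meets a fiber $\{p=p_0\}$ in $3$ points, counted with multiplicity, and meets a fiber $\{u=u_0\}$ in exactly $1$ point. With the paper's convention (the $p$-coordinate first), this is bidegree $(1,3)$, consistent with the earlier statement that $C$ is linear in $p$.

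For (2), identify $C\simeq\PP^1_u$ via $u\mapsto(f(u),u)$. The composite with $\varphi$ is
\[
u\longmapsto -H_{f(u)}(u)=-(qu^6+u^5+u^4+f(u)u^3).
\]
Substituting $f(u)u^3=-2qu^6-\frac53u^5-\frac43u^4$ gives
\[
-\Bigl(qu^6+u^5+u^4-2qu^6-\tfrac53u^5-\tfrac43u^4\Bigr)=qu^6+\tfrac23u^5+\tfrac13u^4.
\]
This is a polynomial in $u$. It therefore defines a morphism $\PP^1_u\to\PP^1_\lambda$ that sends $\infty\mapsto\infty$, so the rational map extends as claimed.

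The only place needing care is that $\varphi$ is defined only over $\A^1_p$, so the composite is a priori only a rational map on $C$. Since $\PP^1_u$ is a smooth curve, any rational map to $\PP^1$ extends uniquely, and the explicit formula identifies the extension. I expect no real obstacle: the main points are the nonvanishing of $q$ (used for the degree) and the invertibility of $2$ and $3$.
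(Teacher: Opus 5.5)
Your proposal is correct and follows the paper's proof: you solve the linear equation $J_p(u)=0$ for $p$ to identify $C$ with the graph $\Gamma_f$, and you substitute $p=f(u)$ into $-H_p(u)$ to obtain $h$. Your additional steps spell out points the paper leaves implicit: the closure argument, separability via $f'(u)\neq 0$ (its constant term is $-\tfrac43$), and the bidegree computed from intersection numbers with the two fiber classes.
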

	
	\begin{proof}
		(1) The equation $J_p(u)=0$ is linear in $p$. Solving it for $p$
		gives $p=f(u)$, so $C$ is precisely the graph of $f$ on the affine
		chart. Both polynomial maps extend to morphisms of
		projective lines by sending infinity to infinity. The polynomial $f$ has degree three. It is separable because $\mathrm{char}(k)$ is different from $2$ or $3$.
		
		(2) It suffices to substitute $p=f(u)$ into $-H_p(u)$:
		\[
		h(u)=-H_{f(u)}(u)
		=qu^6+\frac23u^5+\frac{1}{3}u^4.\qedhere
		\]
	\end{proof}

	\begin{remark}\label{rem:why-pql}
		The choice of the parameters $p,q,\lambda$ in Proposition~\ref{prop:coefficientchart} is motivated by Lemma~\ref{lem:ramificationcurve}. Indeed, in the polynomial
		\[
		H_p(U)+\lambda
		=
		qU^6+U^5+U^4+pU^3+\lambda,
		\]
		the parameter $\lambda$ appears as the constant term and nowhere else, and hence plays the role of the value parameter for the map $u\mapsto-H_p(u)$. More
		importantly, since $p$ occurs in the lowest nonconstant term, the equation for the nonzero ramification points is linear in $p$. Solving for $p$ therefore gives the polynomial cubic
		\[
		p=-2qu^3-\frac53u^2-\frac{4}{3}u,
		\]
		which is precisely why the ramification curve $C$ is the graph of a
		degree-$3$ morphism $\PP^1_{u,K}\to\PP^1_{p,K}$.
	\end{remark}
	
	\begin{lemma}\label{lem:ramificationimages}
		View the morphisms $f\colon \PP^1_{u,K}\to\PP^1_{p,K}$ and $h\colon \PP^1_{u,K}\to\PP^1_{\lambda,K}$ as rational functions $f,h\in K(u)$ on $\PP^1_{u,K}$ by pulling back the affine coordinates $p$ and $\lambda$. 
		
		\begin{enumerate}
			\item The rational function $h-f^2/(4q)\in K(u)$ has a pole of order $5$ at infinity and no poles on $\A^1_{u,K}$.
			\item We have $h\notin K(f)$ and $K(f,h)=K(u)$.
			\item Set $F\coloneqq K(p)$, and let $C_F\coloneqq C\times_{\PP^1_{p,K}}\Spec F
			\subset\PP^1_{u,F}$. Then $C_F$ is a degree-$3$ finite \'etale $F$-scheme, and its three
			geometric points have distinct images under $h$.
		\end{enumerate}
	\end{lemma}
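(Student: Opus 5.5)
For (1) I will simply expand. With $f=-2qu^3-\frac53u^2-\frac43u$ we get
\[
f^2=4q^2u^6+\tfrac{20q}{3}u^5+\left(\tfrac{25}{9}+\tfrac{16q}{3}\right)u^4+\tfrac{40}{9}u^3+\tfrac{16}{9}u^2,
\]
so
\[
\frac{f^2}{4q}=qu^6+\tfrac53u^5+(\text{terms of degree}\le 4).
\]
Since $h=qu^6+\frac23u^5+\frac13u^4$, the difference $h-f^2/(4q)$ is a polynomial in $u$ with coefficients in $K$. Its leading term is $-u^5$. Hence it has no poles on $\A^1_{u,K}$ and a pole of order exactly $5$ at infinity. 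This uses only that $q\neq 0$ and that $3$ is invertible in $k$.

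For (2), suppose that $h\in K(f)$. Because $f$ is a nonconstant polynomial, $K[u]$ is integral over $K[f]$. Hence $h$ is integral over $K[f]$ and lies in $K(f)$. Since $K[f]$ is a polynomial ring and therefore integrally closed, this gives $h=g(f)$ for some $g\in K[X]$. Comparing degrees ($\deg h=6$, $\deg f=3$) shows $\deg g=2$. Comparing leading coefficients forces $g=\frac{1}{4q}X^2+bX+c$. Then $h-f^2/(4q)=bf+c$ has degree at most $3$, which contradicts (1). So $h\notin K(f)$. Next, $[K(u):K(f)]=3$ is prime, and $K(f)\subsetneq K(f,h)\subseteq K(u)$. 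Therefore $K(f,h)=K(u)$.

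For (3), $C_F$ is the generic fiber of $f\colon\PP^1_{u,K}\to\PP^1_{p,K}$, using $C=\Gamma_f$ from Lemma~\ref{lem:ramificationcurve}. It has degree $3$ and is étale because $f$ is separable by Lemma~\ref{lem:ramificationcurve}(1); concretely, $F(u)/F$ is generated by a root of the separable cubic $J_p$. For distinctness of the images, consider the morphism $(f,h)\colon\PP^1_{u,K}\to\PP^1_{p,K}\times_K\PP^1_{\lambda,K}$. By (2) it is birational onto its image curve $B$, so it is injective on geometric points outside a finite set $S$ of closed points of $\PP^1_{u,K}$. The set $f(S)$ is a finite set of closed points of $\PP^1_{p,K}$, so the generic point of $\PP^1_{p,K}$ is not in it. Hence the three geometric points of $C_F$ avoid $S$ and have distinct images under $(f,h)$. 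They share the same $f$-value $p$, so their $h$-values must be distinct.

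I expect the main delicate point to be this last step: making rigorous the passage from the function-field statement in (2) to injectivity on the geometric generic fiber. If needed, I would argue more concretely. Let $u_1,u_2$ be distinct roots of $J_p$ in $\overline F$ with $h(u_1)=h(u_2)$. Since $F(u_1)\cong K(u)$ via $u_1\mapsto u$, and $F(u_1)$ contains both $u_1$ and $u_2$, the element $u_2$ is a root of the quadratic $J_p(U)/(U-u_1)$ over $K(u)$. The condition $h(u_1)=h(u_2)$ would then give a nontrivial $K(f)$-embedding of $K(f,h)$ compatible with $h$. This contradicts $K(f,h)=K(u)$ once one checks that such an embedding would fix $u$.
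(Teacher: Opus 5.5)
Your proof is correct, and part (1) is the paper's computation. The approach differs mildly in (2) and more substantially in (3).

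In (2) you get $h=g(f)$ with $g\in K[X]$ from the integrality of $K[u]$ over the integrally closed ring $K[f]$, and then compare degrees and leading coefficients. The paper instead writes $h-f^2/(4q)=g(f)$ with $g\in K(p)$. It rules out finite poles of $g$ using $f^{-1}(\infty)=\{\infty\}$, and concludes by comparing pole orders modulo $3$. The two arguments are interchangeable; yours is, if anything, slightly quicker.

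In (3) you argue geometrically. The map $(f,h)$ is birational onto its image $B$, so it is an isomorphism over a dense open subset of $B$. The geometric points of $C_F$ lie over the generic point, so $(f,h)$ separates them, and hence so does $h$. The paper stays inside field theory. Identifying $F$ with $K(f)$, one has $C_F=\Spec K(u)$, whose geometric points are the three $F$-embeddings $\sigma_i\colon K(u)\hookrightarrow\overline F$, with $h$-values $\sigma_i(h)$. Since $K(u)=F(h)$ by (2), two embeddings that agree on $h$ are equal. This one-line argument is the rigorous form of the step you flagged as delicate. It also avoids invoking the standard but heavier fact that a birational morphism of curves is an isomorphism over a dense open. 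Your geometric argument, unpacked at the generic point, reduces to this same statement.

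Drop your fallback paragraph. It asserts that $u_2\in F(u_1)$, which fails here because $K(u)/F$ is not a normal extension. The discriminant of $J_p$ is a quadratic polynomial in $p$ whose own discriminant is a nonzero polynomial in the transcendental $q$, so it is not a square in $F$.
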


	\begin{proof}
		(1) One computes
		\[
		\frac{f(u)^2}{4q}
		=
		qu^6+\frac53u^5+\cdots.
		\]
		Hence
		\[
		h(u)-\frac{f(u)^2}{4q}=-u^5+\cdots
		\]
		is a polynomial of degree $5$. This proves (1).
		
		(2) Suppose that $h\in K(f)$. Then $h-f^2/(4q)=g(f)$ for some $g\in K(p)$. If $g$ had a pole at a point $a\in\A^1_{p,K}$, then $g(f)$ would have a pole at every point of $f^{-1}(a)$. Since $f^{-1}(\infty)=\{\infty\}$, the fiber $f^{-1}(a)$ is contained in $\A^1_{u,K}$, contradicting (1). Thus $g$ has no poles on $\A^1_{p,K}$, so $g\in K[p]$.
		
		Since $f$ has a pole of order $3$ at infinity, every nonconstant polynomial in $f$ has pole order divisible by $3$. By (1), the function $h-f^2/(4q)$ has pole order $5$, a contradiction. Therefore $h\notin K(f)$. Since $[K(u):K(f)]=3$, it follows that $K(f,h)=K(u)$.
		
		(3) Identify $F=K(p)$ with the subfield $K(f)\subset K(u)$ via the pullback $f^*\colon F\hookrightarrow K(u)$, which sends $p$ to $f(u)$. Since $C$ is the graph of $f$, its generic fiber $C_F$ is $\Spec K(u)$. By (2), the extension $K(u)/F$ is separable of degree $3$, so $C_F$ is finite \'etale of degree $3$ over $F$. Moreover, $K(u)=F(h)$, so the values of $h$ at the three geometric points of $C_F$ are the three conjugates of $h$ over $F$. These are distinct because $K(u)/F$ is separable.
	\end{proof}

	The following description of $h$ will be used to prove Proposition~\ref{prop:fourthdivisor} below.
	
	\begin{lemma}\label{lem:poledegrees}
		Using the inclusion $K[p]\hookrightarrow K[u]$ given by $p\mapsto f(u)$, there is a unique expression
		\[
		h(u)=\alpha(p)u^2+\beta(p)u+\gamma(p),
		\]
		where $\alpha,\beta,\gamma\in K[p]$ satisfy $\deg\alpha=1$, $\deg\beta\leq 1$, and $\deg\gamma\leq 2$.
	\end{lemma}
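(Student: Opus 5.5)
Uniqueness and existence both come from the fact that $K[u]$ is a free $K[p]$-module with basis $1,u,u^2$ via $p\mapsto f(u)$. Indeed, $f$ is a polynomial of degree $3$ in $u$ with leading coefficient $-2q\in K^\times$. Division with remainder by the monic polynomial $f(u)/(-2q)-p/(-2q)$ in $K[p][u]$ shows that every element of $K[u]=K[p][u]/(f(u)-p)$ can be written uniquely as $a(p)u^2+b(p)u+c(p)$ with $a,b,c\in K[p]$. Here uniqueness holds because $1,u,u^2$ are linearly independent over $K(f)$, since $[K(u):K(f)]=3$ by Lemma~\ref{lem:ramificationimages}(2). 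Applying this to $h$ gives a unique triple $(\alpha,\beta,\gamma)$. The remaining content is the degree bounds, which I will get by comparing pole orders at $u=\infty$.

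The pole order at $\infty$ of $a(f)u^2+b(f)u+c(f)$ equals $\max(3\deg a+2,\,3\deg b+1,\,3\deg c)$. The three terms have pole orders lying in distinct residue classes mod $3$, so no cancellation can occur. Since $h$ has pole order $6$, this maximum equals $6$. This gives $3\deg\alpha+2\le 6$, so $\deg\alpha\le1$; then $3\deg\beta+1\le6$, so $\deg\beta\le1$; and $3\deg\gamma\le 6$, so $\deg\gamma\le 2$. Moreover the maximum $6$ is attained only by $c(f)$, so $\deg\gamma=2$. This argument alone does not give $\deg\alpha=1$, so I still need to rule out $\alpha$ being constant.

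To pin down $\deg\alpha=1$, I would compute the reduction explicitly. Write $h=qu^6+\tfrac23u^5+\tfrac13u^4$ and reduce using $u^3=(p+\tfrac53u^2+\tfrac43u)/(-2q)$. First $u^6=(u^3)^2$, then $u^5=u^2\cdot u^3$ and $u^4=u\cdot u^3$, each reduced iteratively until only $u^2,u,1$ remain with coefficients in $K[p]$. The coefficient of $u^2$ picks up a term linear in $p$, coming from $u^5$ and from the cross term in $u^6$. Concretely, I expect the $p$-part of $\alpha$ to be something like $-\tfrac{1}{3q}p+\dots$. I must check that the contributions to the coefficient of $p$ in $\alpha$ do not cancel. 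This finite bookkeeping is the main (purely computational) obstacle.

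Alternatively, I could argue structurally. If $\alpha$ were constant, then $h-\alpha u^2$ would lie in $K[f]\oplus K[f]u$ with $\deg\beta\le1$. One could then try to derive a contradiction with Lemma~\ref{lem:ramificationimages}(1) by comparing the $u^5$ coefficient: pole order $5\equiv 2\pmod 3$ must come from $\alpha(f)u^2$ with $\deg\alpha=1$. More precisely, I would show that the $u^5$-coefficient of $h-\gamma(f)$ is nonzero. Choosing $\gamma(p)=p^2/(4q)+c_1p+c_0$ to match the $u^6$ term, Lemma~\ref{lem:ramificationimages}(1) gives that $h-f^2/(4q)$ has pole order exactly $5$. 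Subtracting $c_1f+c_0$ (pole order $\le3$) keeps pole order $5$. Hence $\alpha(f)u^2+\beta(f)u+(\gamma-p^2/4q-\dots)(f)$ has pole order $5$. By the residue-class argument, this forces $3\deg\alpha+2=5$, i.e.\ $\deg\alpha=1$. This avoids the explicit computation. I would present this route, provided I first verify that the leading coefficient of $\gamma$ is $1/(4q)$: this follows from comparing the $u^6$ terms, $qu^6$ versus $\gamma_2\cdot 4q^2u^6$.
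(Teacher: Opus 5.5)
Your proof is correct and follows the paper's argument: reduce modulo the cubic relation with $1,u,u^2$ independent over $K(p)$, bound the degrees because the pole orders $3\deg\gamma$, $3\deg\beta+1$, $3\deg\alpha+2$ at $u=\infty$ lie in distinct classes mod $3$, and get $\deg\alpha=1$ from the exact pole order $5$ of $h-f^2/(4q)$ in Lemma~\ref{lem:ramificationimages}(1). Your check that $\gamma$ has leading coefficient $1/(4q)$ is harmless but unnecessary, since $\gamma(f)-f^2/(4q)$ is a polynomial in $f$ and so has pole order divisible by $3$. Also, your tentative guess for the $p$-part of $\alpha$ is off: comparing $u^5$-coefficients gives $\alpha(p)=\frac{p}{2q}+c$ for some $c\in K$, though you rightly do not rely on that guess.
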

	
	Here we allow $\beta=0$ and $\gamma=0$.
	
	\begin{proof}
		We have \[
		u^3
		=
		-\frac{p}{2q}
		-\frac{5}{6q}u^2
		-\frac{2}{3q}u.
		\]
		Thus every power $u^n$, $n\ge3$, can be reduced to a
		$K[p]$-linear combination of $1,u,u^2$. The reduction is
		unique, since $1,u,u^2$ are linearly independent over $K(p)$
		in the degree-three extension $K(u)/K(p)$. Applying this to $h(u)$
		gives 
		\[
		h(u)=\alpha(p)u^2+\beta(p)u+\gamma(p),
		\] 
		where $\alpha,\beta,\gamma\in K[p]$ are uniquely determined.
		
		For any nonzero $a(p)\in K[p]$, the function $a(f(u))$ has pole
		order $3\deg a$ at infinity. The nonzero summands on the right have poles of order $3\deg\gamma$, $3\deg\beta+1$ and $3\deg\alpha+2$. These numbers lie in different congruence classes modulo three.
		In particular, the largest pole cannot cancel with a pole from another
		summand. Since $h$ has pole order $6$, each of these orders is at
		most $6$. Thus $\deg\gamma\le2$, $\deg\beta\le1$ and $\deg\alpha\le 1$.
		
		Since $p=f(u)$ on $C$, we have
		\[
		h(u)-\frac{f(u)^2}{4q}
		=
		\left(\gamma(f(u))-\frac{f(u)^2}{4q}\right)
		+\beta(f(u))u+\alpha(f(u))u^2.
		\]
		By Lemma~\ref{lem:ramificationimages}(1), the left-hand side has a pole
		of order exactly $5$ at $u=\infty$.
		
		The first summand is a polynomial in $f(u)$, and hence, if nonzero, has pole order divisible by $3$ at infinity. The second summand has pole order at most $3\deg\beta+1\le4$, while the third has pole order at most $5$. Since these three pole orders are different modulo
		$3$, no cancellation is possible in the sum. By Lemma~\ref{lem:ramificationimages}(1), their sum has pole order exactly $5$. It follows that the third summand has pole order $5$, so $3\deg\alpha+2=5$, that is, $\deg\alpha=1$.
	\end{proof}

	\subsection{The computation of the change of coordinates \texorpdfstring{$\Phi$}{Phi}}
	\label{sec:coordinatechange}
	
	We now construct a projective isomorphism $\Phi\colon\PP^1_{u,F}\xrightarrow{\sim}\PP^1_{\lambda,F}$ which pulls back the degree-$3$ divisor of finite branch points of $Y_F\to\PP^1_{\lambda,F}$ to $C_F\subset\PP^1_{u,F}$.
	
	As in Lemma~\ref{lem:ramificationimages}(3), let 
	\[F=K(p)=K(\PP^1_{p,K}),\qquad C_F= C\times_{\PP^1_{p,K}}\Spec F
	\subset\PP^1_{u,F}.\]
	
	\begin{lemma}\label{lem:fiberiso}
		There is a unique projective isomorphism $\Phi\colon \PP^1_{u,F}\xrightarrow{\sim}\PP^1_{\lambda,F}$
		whose restriction to $C_F\subset\PP^1_{u,F}$ agrees with $h$.
	\end{lemma}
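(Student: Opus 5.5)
The plan is to make precise the statement ``a Möbius transformation is determined by its values on three points,'' in a form that works for a degree-$3$ étale $F$-scheme rather than three $F$-rational points.

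\emph{Uniqueness.} Suppose $\Phi_1,\Phi_2$ both restrict to $h$ on $C_F$. Then $\Phi_2^{-1}\circ\Phi_1$ is an automorphism of $\PP^1_{u,F}$ that fixes $C_F$ pointwise. After base change to $\bar F$, it fixes three distinct geometric points, because $C_F$ is étale of degree $3$ by Lemma~\ref{lem:ramificationimages}(3). An element of $PGL_2(\bar F)$ with three distinct fixed points is the identity, so $\Phi_1=\Phi_2$.

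\emph{Existence.} The key point is that $K(u)=F(h)$ by Lemma~\ref{lem:ramificationimages}(2). So $C_F=\Spec F[u]/(j(u))$, where $j$ is the monic cubic minimal polynomial of $u$ over $F$, namely $u^3+\tfrac{5}{6q}u^2+\tfrac{2}{3q}u+\tfrac{p}{2q}$ from the proof of Lemma~\ref{lem:poledegrees}. The image $h(C_F)$ is the closed point of $\PP^1_{\lambda,F}$ cut out by the minimal polynomial of $h$ over $F$, whose roots are the three distinct conjugates of $h$. We seek $\Phi(u)=(au+b)/(cu+d)$ with $a,b,c,d\in F$, $ad-bc\neq0$, and
\[
au+b\equiv h\,(cu+d)\pmod{j(u)}\quad\text{in } K(u)=F[u]/(j).
\]
Write $h=\alpha u^2+\beta u+\gamma$ as in Lemma~\ref{lem:poledegrees}. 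Both sides are then elements of the $3$-dimensional $F$-vector space $F\oplus Fu\oplus Fu^2$. The unknowns $(a,b,c,d)$ span a $4$-dimensional space, and the congruence imposes $3$ linear conditions, so a nonzero solution exists over $F$.

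It remains to check that any nonzero solution satisfies $ad-bc\neq0$. If $c=d=0$, then $au+b=0$ in $K(u)$, which forces $a=b=0$ because $1,u$ are independent; this contradicts nonvanishing. So $cu+d\neq0$, and we get $h=(au+b)/(cu+d)$ in $K(u)$. If $ad-bc=0$, this quotient is a constant in $F$, which contradicts $h\notin F$ (indeed $F(h)=K(u)\neq F$). Thus $\Phi$ is a projective isomorphism.

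Finally, we confirm that $\Phi$ agrees with $h$ on $C_F$. At each geometric point $u_i$ of $C_F$, the congruence gives $au_i+b=h(u_i)(cu_i+d)$. The values $h(u_i)$ are finite, since $h$ is a polynomial and $u_i\neq\infty$. If $cu_i+d=0$, then $au_i+b=0$ as well, so $(a,b)$ and $(c,d)$ would be proportional, contradicting $ad-bc\neq0$. Hence $\Phi(u_i)=h(u_i)$ for each $i$. Equivalently, the scheme-theoretic restriction $\Phi|_{C_F}$ equals $h|_{C_F}$ as morphisms $\Spec K(u)\to\PP^1_{\lambda,F}$, since both are given by the element $h\in K(u)$.

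The only delicate step is excluding degenerate solutions of the linear system. That is handled by $h\notin F$ together with the linear independence of $1,u,u^2$ over $F$.
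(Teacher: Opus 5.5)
Your proof is correct, but your existence argument takes a different route from the paper's. Your uniqueness argument (three distinct geometric fixed points force the identity) is the same as the paper's.

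\textbf{The paper's route.} The paper argues by Galois descent. Over a separable closure there is a unique M\"obius transformation sending the three geometric points of $C_F$ to their three distinct images under $h$. Uniqueness makes it Galois-invariant, so it is defined over $F$.

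\textbf{Your route.} You avoid descent entirely. You solve the $F$-linear system $au+b\equiv h\,(cu+d)$ in the $3$-dimensional $F$-space $K(u)=F[u]/(j)$. You then exclude degenerate kernel vectors using two facts: $1,u$ are $F$-independent, and $h\notin F$.

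\textbf{Comparison.} The descent argument is shorter. It uses only that $C_F$ and $h(C_F)$ are \'etale triples matched bijectively by $h$. Your argument uses the extra fact that $C_F$ is integral. That is what makes $cu+d$ invertible in the field $K(u)$, and what turns a degenerate solution into the statement that $h$ is constant. In exchange, your argument is constructive and needs only $h\notin F$. Distinctness of the three branch values then comes for free, since M\"obius maps are injective.

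\textbf{Link to the paper's later computation.} Your linear system is exactly the one the paper solves explicitly in Proposition~\ref{prop:fourthdivisor}. There the identity $N=DR-\alpha^2 j$ exhibits the kernel vector $(n_1,n_0,d_1,d_0)$. The paper checks nondegeneracy via $\delta=-\Nm_{L/F}(D(\theta))\neq 0$. Your observation would give $\delta\neq0$ directly, since $d_1=\alpha\neq0$. The paper still needs the norm identity itself later, in Proposition~\ref{prop:exactcover}.
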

	
	\begin{proof}
		By Lemmas~\ref{lem:ramificationcurve}(1) and~\ref{lem:ramificationimages}, the
		generic fiber $C_F$ and its image under $h$ are isomorphic finite
		\'etale schemes of degree three over $F$. Over a separable
		closure of $F$, there is a unique projective isomorphism sending each of the three distinct points of the first triple to its prescribed image in
		the second. By Galois descent, it must be defined over $F$.
	\end{proof}

	By Lemma~\ref{lem:ramificationcurve}, we may 
	write $L=F[T]/(j(T))$, where
	\begin{equation}\label{eq:cubicfield}
		j(T)=\frac{J_p(T)}{6q}=T^3+bT^2+cT+e,
		\quad
		b=\frac5{6q},\quad c=\frac2{3q},\quad e=\frac p{2q}.
	\end{equation}
	Let $\theta\in L$ be the image of $T$, so that $L=F(\theta)$, and define $r\coloneqq h(\theta)\in L$. By Lemma~\ref{lem:poledegrees}, we have
	\begin{equation}\label{eq:defin-r}
		r=h(\theta)=\alpha(p)\theta^2+\beta(p)\theta+\gamma(p)\quad \text{in $L$,}
	\end{equation}
	where $\alpha,\beta,\gamma\in K[p]$ satisfy $\deg\alpha=1$, $\deg\beta\le1$, and $\deg\gamma\le2$. Geometrically, $\theta$ represents the generic point of the degree-$3$
	ramification subscheme of $\varphi_F\colon \PP^1_{u,F}\to\PP^1_{\lambda,F}$, while $r=h(\theta)$ is the corresponding generic branch value.

	We now write $\Phi$ as a fraction $\Phi=N/D$, where $N,D\in F[u]$ satisfy $\deg_u(D)=1$ and $\deg_u(N)\leq 1$. This will allow us to determine the pullback of the fourth branch point $\lambda=\infty$: it is the divisor $D=0$, whose closure has bidegree $(1,1)$; see Proposition~\ref{prop:fourthdivisor}(4) below. The same formulas for $N$ and $D$ will be used in Proposition~\ref{prop:exactcover} to compute the quadratic extension of $F(u)$ obtained from $K(Y)/F(\lambda)$ by pullback along $\Phi^*\colon F(\lambda)\xrightarrow{\sim}F(u)$.
	
	\begin{proposition}\label{prop:fourthdivisor}
		With the notation of \eqref{eq:cubicfield} and \eqref{eq:defin-r}, define the following elements of $K[p,u]\subset F[u]$: 
		\[
		R(u)\coloneqq \alpha u^2+\beta u+\gamma,\quad
		D(u)\coloneqq \alpha u+b\alpha-\beta,\quad
		N(u)\coloneqq D(u)R(u)-\alpha^2j(u).
		\]
		Observe that $R(\theta)=r$. Then the following statements hold.
		
		\begin{enumerate}
			\item We have $\deg_u(D)=1$, $\deg_u(N)\leq 1$, and $N(\theta)=rD(\theta)$.
			
			\item Write $D(u)=d_1u+d_0$, $N(u)=n_1u+n_0$, where $d_0,d_1,n_0,n_1\in F$, and define $\delta\coloneqq n_1d_0-n_0d_1\in F$. Then $\Nm_{L/F}(D(\theta))=-\delta\ne0$.
			
			\item The linear forms $N,D\in F[u]$ have no common zero, and the isomorphism $\Phi$ of Lemma~\ref{lem:fiberiso} is given by $\Phi=N/D$.
			
			\item Let $E\subset\PP^1_{p,K}\times_K\PP^1_{u,K}$ be the zero locus of the homogenization $D^{\mathrm h}$ of $D$. Then $E$ has bidegree $(1,1)$, its restriction to the generic fiber $\PP^1_{u,F}$ is the $F$-point $\Phi^{-1}(\infty)$, and $C$ and $E$ have no common irreducible component.
		\end{enumerate}
	\end{proposition}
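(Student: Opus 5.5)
Each part is a direct computation from the definitions. The one non-formal input is $\deg\alpha=1$ from Lemma~\ref{lem:poledegrees}, together with $\alpha\neq 0$.

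For (1): $D=\alpha u+(b\alpha-\beta)$ has $u$-degree $1$ because $\alpha\ne0$. Since $j$ is monic of degree $3$, the $u^3$ coefficient of $D R$ is $\alpha\cdot\alpha=\alpha^2$, so $N=DR-\alpha^2 j$ has degree at most $2$. The $u^2$ coefficient of $DR$ is $\alpha\beta+(b\alpha-\beta)\alpha=b\alpha^2$, and the $u^2$ coefficient of $\alpha^2 j$ is $b\alpha^2$. These cancel, so $\deg_u N\le1$. This cancellation is precisely why the constant term $b\alpha-\beta$ was chosen. Evaluating at $\theta$ and using $j(\theta)=0$ and $R(\theta)=r$ gives $N(\theta)=D(\theta)r$.

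For (2): the norm of a linear polynomial evaluated at $\theta$ is $\Nm_{L/F}(d_1\theta+d_0)=-d_1^3\,j(-d_0/d_1)$. I would then relate this to $\delta$ using the identity $N=DR-\alpha^2j$ evaluated at the root $u_0=-d_0/d_1$ of $D$. This gives $N(u_0)=-\alpha^2 j(u_0)$, and since $d_1=\alpha$,
\[
N(u_0)=n_1u_0+n_0=\frac{-n_1d_0+n_0d_1}{d_1}=-\frac{\delta}{\alpha}.
\]
Hence $j(u_0)=\delta/\alpha^3$, and
\[
\Nm_{L/F}(D(\theta))=-\alpha^3\,j(u_0)=-\delta.
\]
For nonvanishing, note that $D(\theta)\neq0$ in the field $L$, because $1,\theta$ are $F$-linearly independent ($[L:F]=3$) and $d_1\ne0$. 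So the norm is nonzero.

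For (3): $\delta\neq0$ says exactly that the linear forms $N$ and $D$ have no common zero, so $N/D$ defines a projective isomorphism $\PP^1_{u,F}\to\PP^1_{\lambda,F}$. By (1), this isomorphism sends $\theta$ to $r=h(\theta)$. Applying the $F$-conjugations, it sends each geometric point of $C_F$ to its $h$-value, since $N,D$ have coefficients in $F$. By the uniqueness in Lemma~\ref{lem:fiberiso}, it equals $\Phi$.

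For (4): $D^{\mathrm h}=\alpha(p)u_1+(b\alpha(p)-\beta(p))u_0$ after homogenizing in both sets of variables. Its bidegree is $(1,1)$ because $\deg\alpha=1$, $\deg\beta\le1$, and $b\in K$. A form of bidegree $(1,1)$ is either irreducible or splits into forms of bidegrees $(1,0)$ and $(0,1)$. On the generic fiber its zero is the single $F$-point $u=-(b\alpha-\beta)/\alpha$, which is $D=0$, that is, $\Phi^{-1}(\infty)$. A common component with $C$ would have to be $C$ itself, since $C$ is irreducible of bidegree $(1,3)$. That is impossible, since $C$ has bidegree $(1,3)\ne$ that of any component of $E$. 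Equivalently, $D(\theta)\neq0$ from (2) shows directly that the generic points differ.

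The main obstacle is the norm identity in (2). The rest is bookkeeping once the $u^2$ cancellation in (1) is verified.
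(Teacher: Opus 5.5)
Your proposal is correct and follows the paper's proof essentially step for step. The steps are: the top-coefficient cancellation in $N=DR-\alpha^2 j$; evaluating that identity at the root $u_0=-d_0/\alpha$ of $D$ to get $\delta=\alpha^3 j(u_0)$, hence $\Nm_{L/F}(D(\theta))=-\delta\neq 0$ since $[L:F]=3$; uniqueness in Lemma~\ref{lem:fiberiso} for (3); and for (4), the bidegree count together with irreducibility of the $(1,3)$-curve $C$. The only cosmetic difference is that the paper writes out $N$ explicitly as a linear polynomial, whereas you only check that its $u^3$ and $u^2$ coefficients cancel.
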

	
	\begin{proof}
		(1) Since $\alpha\ne0$ by Lemma~\ref{lem:poledegrees}, we have $\deg_u(D)=1$. Moreover, \[
		\begin{aligned}
			N(u)&=(\alpha u+b\alpha-\beta)(\alpha u^2+\beta u+\gamma)-\alpha^2(u^3+bu^2+cu+e)\\
			&=(\alpha\gamma+b\alpha\beta-\beta^2-c\alpha^2)u+b\alpha\gamma-\beta\gamma-e\alpha^2,
		\end{aligned}
		\]
		so that $\deg_u N\le1$. Since $j(\theta)=0$ and $R(\theta)=r$, we have
		\[N(\theta)=D(\theta)R(\theta)-\alpha^2j(\theta)=rD(\theta).
		\]
		
		(2) We have $d_1=\alpha\ne0$. Let $u_0=-d_0/\alpha$ be the zero of $D$. The definition of $N$ gives $N(u_0)=-\alpha^2j(u_0)$, and on the other hand
		\[
		N(u_0)=n_1u_0+n_0=-\frac{n_1d_0-n_0\alpha}{\alpha}=-\frac{\delta}{\alpha}.
		\]
		Hence $\delta=\alpha^3j(u_0)$. If $\theta_1,\theta_2,\theta_3$ are the Galois conjugates of $\theta$, then
		\[\Nm_{L/F}(D(\theta))=\prod_{i=1}^3  \alpha(\theta_i-u_0)=-\alpha^3j(u_0)=-\delta.\]
		Finally, since $D$ has degree $1$ whereas the minimal
		polynomial $j$ of $\theta$ has degree $3$, we have $D(\theta)\ne0$ and hence $\delta\ne0$.
		
		(3) By (1) and (2), $N$ and $D$ define an isomorphism
		$\frac{N}{D}\colon\PP^1_{u,F}\xrightarrow{\sim}\PP^1_{\lambda,F}$. Moreover, $		N(\theta)/D(\theta)=r=h(\theta)$, so the restriction of $N/D$ to $C_F=\Spec L$ agrees with $h$. By the uniqueness part of Lemma~\ref{lem:fiberiso}, this implies $\Phi=N/D$.
		
		(4) Since $D(u)=\alpha(p)u+b\alpha(p)-\beta(p)$, Lemma~\ref{lem:poledegrees} shows that $\deg_u(D)=1$ and that $\deg_p(D)\leq 1$. Since $\deg\alpha=1$, the term $\alpha(p)u$ contains a nonzero $pu$-term, so $\deg_p(D)=1$. Hence $D^{\mathrm h}$ is a global section of $\OO(1,1)$, and its zero divisor $E$ has bidegree $(1,1)$.
		
		Over $F=K(p)$, the equation $D=0$ defines $\Phi^{-1}(\infty)$ by (3), so that $E_F=\Phi^{-1}(\infty)$. Finally, $C$ is irreducible of bidegree $(1,3)$, and therefore cannot be an irreducible component of the divisor $E$ of bidegree $(1,1)$.
	\end{proof}
	
	\begin{remark}
		An explicit computation shows that the divisor $E$ is in fact irreducible, that is, that it is not the union of a horizontal component and a vertical component. Since we will not need this stronger statement below, we omit the proof.
	\end{remark}
	
	\subsection{The pulled-back double cover}\label{sec:twist}
	
	\begin{lemma}\label{lem:signnorm}
		We have
		\[
		K(Y)=F(\lambda)\left(\sqrt{-q\Nm_{L(\lambda)/F(\lambda)}(\lambda-r)}\right).
		\]
	\end{lemma}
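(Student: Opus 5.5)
Our starting point is Proposition~\ref{prop:surfacepencil}, which gives $K(Y)=F(\lambda)\bigl(\sqrt{\Disc_U(H_p(U)+\lambda)}\bigr)$. The task is therefore to compute $\Disc_U(H_p(U)+\lambda)$ modulo squares in $F(\lambda)$. Set $G(U)\coloneqq H_p(U)+\lambda$, of degree $6$ with leading coefficient $q$. We use the resultant formula
\[
\Disc_U(G)=(-1)^{6\cdot5/2}\,q^{-1}\operatorname{Res}_U(G,G')=-q^{-1}\operatorname{Res}_U(G,G').
\]
The sign $(-1)^{15}=-1$ is where the factor $-1$ in the statement comes from, and $q^{-1}$ agrees with $q$ modulo squares.

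We compute $\operatorname{Res}_U(G,G')$ using the roots of $G'$. Since $G'=H_p'=U^2J_p(U)=6q\,U^2j(U)$, the resultant is $\pm$ (power of $q$, $6$) $\cdot\prod_{G'(\xi)=0}G(\xi)$. More precisely, we use $\operatorname{Res}(G,G')=(-1)^{6\cdot 5}\operatorname{Res}(G',G)=\operatorname{lc}(G')^{6}\prod_{\xi}G(\xi)$, where $\xi$ runs over the roots of $G'$ with multiplicity and $\operatorname{lc}(G')=6q$. Because $6q$ is raised to the even power $6$, it contributes only a square. The double root $\xi=0$ contributes $G(0)^2=\lambda^2$, which is also a square. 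The roots $\theta_1,\theta_2,\theta_3$ of $j$ are the conjugates of $\theta\in L$, and each contributes $G(\theta_i)=H_p(\theta_i)+\lambda$.

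The key identification is $H_p(\theta)=-h(\theta)=-r$. This holds because $p=f(\theta)$ in $L$, since $j(\theta)=0$ is equivalent to $J_p(\theta)=0$, and $h(u)=-H_{f(u)}(u)$ by Lemma~\ref{lem:ramificationcurve}(2). Hence
\[
\prod_i G(\theta_i)=\Nm_{L(\lambda)/F(\lambda)}(\lambda-r).
\]
Combining the pieces, $\Disc_U(G)=-q^{-1}\cdot(\text{square})\cdot\Nm(\lambda-r)$, which is $-q\,\Nm(\lambda-r)$ modulo $F(\lambda)^{\times2}$. This gives the lemma.

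The main obstacle is bookkeeping the signs and the leading-coefficient normalizations correctly. The argument depends on three facts: the exponent $n(n-1)/2=15$ is odd; the power of $\operatorname{lc}(G')$ appearing is even; and the power of $\operatorname{lc}(G)$ appearing is odd, which is what leaves the single factor $q$. I would check these against the standard identity $\Disc(G)=(-1)^{n(n-1)/2}a_n^{-1}\operatorname{Res}(G,G')$ together with $\operatorname{Res}(A,B)=\operatorname{lc}(A)^{\deg B}\prod_{A(\xi)=0}B(\xi)$. The second fact is also why $6$ causes no trouble: it appears only to an even power. Finally, the conjugates $\theta_i$ are distinct and nonzero, since $j(0)=e=p/(2q)\neq0$. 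The product over the roots of $G'$ is therefore exactly $\lambda^2$ times the norm, with no further factors.
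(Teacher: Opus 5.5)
Your proof is correct and takes essentially the same route as the paper. Both arguments evaluate $\Disc_U(H_p(U)+\lambda)$ through the roots $0,0,\theta_1,\theta_2,\theta_3$ of $H_p'=6qU^2j(U)$, using $H_p(\theta_i)=-h(\theta_i)$, and both arrive at $-6^6q^5\lambda^2\Nm_{L(\lambda)/F(\lambda)}(\lambda-r)$. The only difference is that the paper quotes the formula $\Disc_U(f+\lambda)=(-1)^{n(n-1)/2}n^na^{n-1}\prod_i(f(c_i)+\lambda)$ from the literature, whereas you derive it from the standard resultant identities.
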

	
	\begin{proof}
		By Proposition~\ref{prop:surfacepencil}, the extension
		$K(Y)/F(\lambda)$ is obtained by adjoining a square root of
		$\Disc_U(H_p(U)+\lambda)$.
		
		Let $f(U)$ be a polynomial of degree $n$ with leading coefficient $a$, and write $f'(U)=na\prod_{i=1}^{n-1}(U-c_i)$ in a splitting field. Then
		\[
		\Disc_U(f(U)+\lambda)=(-1)^{n(n-1)/2}n^na^{n-1}\prod_{i=1}^{n-1}(f(c_i)+\lambda);
		\]
		see for example \cite[Lemma 2.8(ii)]{BSEF23}. We apply this to $f(U)=H_p(U)$. Since $H_p'(U)=6qU^2j(U)$, $H_p(0)=0$ and $H_p(\theta_i)=-h(\theta_i)$ for $i=1,2,3$, we obtain
		\[\Disc_U(H_p(U)+\lambda)=-6^6q^5\lambda^2\prod_{i=1}^3(\lambda-h(\theta_i))=-6^6q^5\lambda^2
		\Nm_{L(\lambda)/F(\lambda)}(\lambda-r).\]
		As $6^6q^4\lambda^2=(6^3q^2\lambda)^2$ is a square in $F(\lambda)$, the discriminant has the same square class as $-q\Nm_{L(\lambda)/F(\lambda)}(\lambda-r)$, and the result follows.
	\end{proof}
	
	\begin{proposition}\label{prop:exactcover}
		Under the isomorphism $\Phi^*\colon F(\lambda)\xrightarrow{\sim}F(u)$,
		the quadratic extension $K(Y)/F(\lambda)$ becomes $F(u)(\sqrt{qj(u)D(u)})/F(u)$.
	\end{proposition}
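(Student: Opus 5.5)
The plan is to compute the pullback directly from the description of $K(Y)/F(\lambda)$ in Lemma~\ref{lem:signnorm}. Under $\Phi^*$, the variable $\lambda$ goes to $\Phi(u)=N(u)/D(u)$ by Proposition~\ref{prop:fourthdivisor}(3). The norm $\Nm_{L(\lambda)/F(\lambda)}$ is transported to $\Nm_{L(u)/F(u)}$, because $\Phi^*$ is an $F$-isomorphism and extends $L$-linearly to $L(\lambda)\xrightarrow{\sim}L(u)$. Hence the pulled-back extension is
\[
F(u)\left(\sqrt{-q\Nm_{L(u)/F(u)}\bigl(\Phi(u)-r\bigr)}\right),
\]
and it remains to determine the square class of the radicand in $F(u)^\times$.

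First I rewrite $\Phi(u)-r$ using $r=N(\theta)/D(\theta)$, which follows from Proposition~\ref{prop:fourthdivisor}(1) and~(2), since $D(\theta)\neq0$. Expanding $N(u)D(\theta)-N(\theta)D(u)$ with $N=n_1u+n_0$ and $D=d_1u+d_0$ gives
\[
N(u)D(\theta)-N(\theta)D(u)=(n_1d_0-n_0d_1)(u-\theta)=\delta(u-\theta).
\]
Therefore
\[
\Phi(u)-r=\frac{\delta\,(u-\theta)}{D(u)\,D(\theta)}\quad\text{in } L(u).
\]

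Next I take norms from $L(u)$ to $F(u)$, using three facts:
\begin{itemize}
\item $\delta\in F^\times$ and $D(u)\in F(u)^\times$, so their norms are their cubes;
\item $\Nm(u-\theta)=\prod_i(u-\theta_i)=j(u)$, because $j$ is the monic minimal polynomial of $\theta$;
\item $\Nm_{L/F}(D(\theta))=-\delta$, by Proposition~\ref{prop:fourthdivisor}(2).
\end{itemize}
These give
\[
\Nm_{L(u)/F(u)}\bigl(\Phi(u)-r\bigr)=\frac{\delta^3 j(u)}{D(u)^3\cdot(-\delta)}=-\frac{\delta^2 j(u)}{D(u)^3}.
\]
Multiplying by $-q$, the radicand becomes $q\delta^2 j(u)/D(u)^3$. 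This differs from $qj(u)D(u)$ by the square $\bigl(\delta/D(u)^2\bigr)^2$, which yields the claimed extension $F(u)\bigl(\sqrt{qj(u)D(u)}\bigr)$.

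The only point requiring care is the second step: the identification of $\Phi(u)-r$ as a multiple of $u-\theta$, together with the sign bookkeeping in the norm of $D(\theta)$. The sign is exactly what cancels the $-q$ of Lemma~\ref{lem:signnorm} and leaves $q$. Once the cross-ratio-type identity above is written down, the rest is formal, so I do not expect a genuine obstacle.
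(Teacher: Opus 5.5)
Your proposal is correct and follows the paper's proof essentially step for step: the same identity $N(u)D(\theta_i)-N(\theta_i)D(u)=\delta(u-\theta_i)$, the same norm computation using $\Nm_{L/F}(D(\theta))=-\delta$, and the same final adjustment of the square class by $\bigl(\delta/D(u)^2\bigr)^2$. Your remark that $\Phi^*$ carries $\Nm_{L(\lambda)/F(\lambda)}$ to $\Nm_{L(u)/F(u)}$ states explicitly a point the paper uses without comment.
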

	
	\begin{proof}
		Write
		\[
		D(u)=d_1u+d_0,\qquad
		N(u)=n_1u+n_0,\qquad
		\delta=n_1d_0-n_0d_1,
		\]
		as in Proposition~\ref{prop:fourthdivisor}. By Proposition~\ref{prop:fourthdivisor}(1)--(2), we have $D(u)\ne0$ and $\delta\ne0$.
		
		For every conjugate
		$\theta_i$ of $\theta$, we have
		\[
		N(u)D(\theta_i)-N(\theta_i)D(u)
		=
		\delta(u-\theta_i).
		\]
		Since $N(\theta_i)/D(\theta_i)=h(\theta_i)$, it follows that
		\[
		\frac{N(u)}{D(u)}-h(\theta_i)
		=
		\frac{\delta(u-\theta_i)}
		{D(u)D(\theta_i)}.
		\]
		Multiplying these identities for $i=1,2,3$ and applying Proposition~\ref{prop:fourthdivisor}(2) gives
		\[\Nm_{L(u)/F(u)}
		\left(\frac{N(u)}{D(u)}-r\right)=
		\frac{\delta^3\prod_{i=1}^3(u-\theta_i)}
		{D(u)^3\prod_{i=1}^3D(\theta_i)}=
		-\frac{\delta^2j(u)}{D(u)^3}.
		\]
		By Lemma~\ref{lem:signnorm}, we have
		\[K(Y)=F(\lambda)\left(\sqrt{-q\Nm_{L(\lambda)/F(\lambda)}(\lambda-r)}\right).
		\]
		Since $\Phi^*(\lambda)=N(u)/D(u)$, the pulled-back quadratic extension is
		\[F(u)\left(\sqrt{q\delta^2j(u)D(u)^{-3}}\right)=F(u)\left(\sqrt{qj(u)D(u)}\right).\qedhere\]
	\end{proof}

	Let $j^{\mathrm h}$ and $D^{\mathrm h}$ be the bihomogenizations of
	$j(u)$ and $D(u)$, respectively, and let
	\[
	\pi\colon Z\longrightarrow
	\PP^1_{p,K}\times_K\PP^1_{u,K}
	\]
	be the normalization of the double cover of $\PP^1_{p,K}\times_K\PP^1_{u,K}$ defined by the section
	\[
	qj^{\mathrm h}D^{\mathrm h}
	\in
	H^0(
	\PP^1_{p,K}\times_K\PP^1_{u,K},
	\OO(2,4)
	)=H^0(
	\PP^1_{p,K}\times_K\PP^1_{u,K},
	\OO(1,2)^{\otimes 2}
	).
	\]
	The branch divisor of $\pi$ is $C+E$; see for example 
	\cite[Remark~3.14(a), Lemma~3.15(b),(d)]{EV92}.
	By Proposition~\ref{prop:exactcover}, $Z$ is birational to $Y$.
	
	\begin{corollary}\label{cor:conic-fibration}
		Consider the composite
		\[
		\rho\colon
		Z\xlongrightarrow{\pi}
		\PP^1_{p,K}\times_K\PP^1_{u,K}
		\xlongrightarrow{\operatorname{pr}_u}
		\PP^1_{u,K}.
		\]
		The generic fiber of $\rho$ is a smooth conic over $K(u)$ with a
		$K(u)$-rational point. In particular, $Z$ is $K$-rational.
	\end{corollary}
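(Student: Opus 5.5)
We work over the generic point of the $u$-line. The function field $K(Z)$ equals $K(u)(p)\bigl(\sqrt{qj(u)D(u)}\bigr)$, where $j$ and $D$ are now regarded as polynomials in $p$ with coefficients in $K(u)$.

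\textbf{Step 1: the degrees in $p$.} From \eqref{eq:cubicfield}, $j(u)=u^3+bu^2+cu+p/(2q)$ is linear in $p$ with nonzero $p$-coefficient $1/(2q)$. Its zero is $p=f(u)$, which is the graph $C$ of Lemma~\ref{lem:ramificationcurve}. By Proposition~\ref{prop:fourthdivisor}(4), $D=\alpha(p)u+b\alpha(p)-\beta(p)$ has $\deg_p D=1$. The reason is that $\alpha$ has degree exactly $1$, so the $pu$-coefficient is nonzero, and hence the $p$-coefficient of $D$ is a nonzero element of $K(u)$.

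\textbf{Step 2: the generic fiber is a conic.} By Step 1, $qjD=a(p-p_1)(p-p_2)$ with $a\in K(u)^\times$ and $p_1=f(u)$, $p_2\in K(u)$ the root of $D$. We need $p_1\neq p_2$. If they were equal, the linear polynomials $j$ and $D$ in $p$ would be proportional over $K(u)$. Their bihomogenizations would then share the zero locus over the generic point of the $u$-line, forcing $C$ to be a component of $E$. This contradicts Proposition~\ref{prop:fourthdivisor}(4).

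So the generic fiber $Z_{K(u)}$ is the normalization of $\PP^1_{p,K(u)}$ in $K(u)(p)\bigl(\sqrt{a(p-p_1)(p-p_2)}\bigr)$, a double cover branched at two distinct rational points. Concretely, it is the smooth projective conic
\[
y^2=a(p-p_1)(p-p_2),
\]
that is, the closure of $y^2=a(p-p_1)(p-p_2)$ in $\PP^2_{K(u)}$. This is smooth because the quadratic form $y^2-a(p-p_1)(p-p_2)z^2$ (homogenized) is nondegenerate when $p_1\ne p_2$, using $\mathrm{char}\neq 2$. The normalization of $\PP^1\times\PP^1$ restricted to the generic fiber agrees with this smooth model, since normalization commutes with localization.

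\textbf{Step 3: a rational point and rationality.} The point $(p,y)=(p_1,0)$ is $K(u)$-rational because $p_1=f(u)\in K(u)$. A smooth conic with a rational point is isomorphic to $\PP^1$; explicitly, project from that point, or substitute $y=t(p-p_1)$ to get $p=(ap_2-t^2p_1)/(a-t^2)$. Hence $K(Z)=K(u)(t)$, which is purely transcendental of degree $2$ over $K$, so $Z$ is $K$-rational.

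The only point needing care is the distinctness $p_1\neq p_2$ in Step 2. If the reduction to Proposition~\ref{prop:fourthdivisor}(4) feels too quick, a direct check works: $D(f(u),u)$ is a polynomial in $u$ that is not identically zero. Its leading term comes from $\alpha(f(u))u$, which has degree $4$, while the other terms have degree at most $3$, so it cannot cancel.
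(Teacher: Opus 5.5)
Your proof is correct and takes essentially the same approach as the paper: project to the $u$-line and note that $C$ and $E$, of bidegrees $(1,3)$ and $(1,1)$, each meet $\PP^1_{p,K(u)}$ in one $K(u)$-rational point, with the two points distinct because $C\not\subset E$. The generic fiber is therefore a smooth conic, and the ramification point over $C$ is a $K(u)$-rational point on it; your explicit equation $y^2=a(p-p_1)(p-p_2)$, the parametrization via $y=t(p-p_1)$, and the degree-$4$ check that $D(f(u),u)\neq 0$ simply spell these steps out concretely.
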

	
	\begin{proof}
		By Lemma~\ref{lem:ramificationcurve} and Proposition~\ref{prop:fourthdivisor}(4), the curves $C$ and $E$ have bidegrees $(1,3)$ and $(1,1)$, respectively, and have no common irreducible component. Moreover, by Proposition~\ref{prop:exactcover} and the definition of $Z$, the branch divisor of $\pi$ is $C+E$. It follows that, over the generic point of $\PP^1_{u,K}$, each of $C$ and $E$ meets $\PP^1_{p,K(u)}$ in a single $K(u)$-rational point, and these two points are distinct. Hence the generic fiber of $\rho$ is a degree-$2$ cover of $\PP^1_{p,K(u)}$ branched at two distinct rational points, and is therefore a smooth conic. The ramification point above either branch point is $K(u)$-rational. Thus the generic conic has a $K(u)$-rational point and is therefore $K(u)$-rational. Hence $K(Z)=K(u)(t)$ for some transcendental element $t$, so $Z$ is $K$-rational.
	\end{proof}

	\subsection{Proof of Theorem~\ref{thm:quadric}}
	\label{sec:rationality}
	We now prove Theorem~\ref{thm:quadric}. Let $K=k(q)$. By Propositions~\ref{prop:surfacepencil} and \ref{prop:exactcover}, we have $k(Q)^{A_6}=K(Y)=K(Z)$. By Corollary~\ref{cor:conic-fibration}, $Z$ is $K$-rational. Therefore $Q/A_6$ is $k$-rational, as desired.

	\section{Proof of Theorem~\ref{thm:main}}
	\label{sec:descent}
	
	Let $k$ be a field such that $2,-3\in k^{\times2}$. Recall that
	\[
	W=\{(x_1,\ldots,x_6)\in k^6:\textstyle\sum_i x_i=0\},
	\qquad
	Q=\{[x]\in\PP_k(W):\textstyle\sum_i x_i^2=0\}.
	\]
	Set
	\[
	\mathcal B=\PP_k(W)/A_6,\qquad X=Q/A_6,\qquad
	M=(\PP_k(W)\times Q)/A_6,
	\]
	where $A_6$ acts diagonally on the product. The two projections give a diagram
	\[
	\begin{tikzcd}
		& M \arrow[dl] \arrow[dr] & \\
		\mathcal B && X.
	\end{tikzcd}
	\]
	
	We will prove that both projections have rational generic
	fibers over their respective fields of definition. Since $X$ is
	rational by Theorem~\ref{thm:quadric}, this will show that
	$\mathcal B\times\PP^3_k$ is rational.
	
	It is not difficult to show that the $A_6$-actions on $\PP_k(W)$ and on $Q$ are faithful, and hence generically free. Let
	\[
	E_0\coloneqq k(\PP_k(W)),\qquad F_0\coloneqq E_0^{A_6}=k(\mathcal B).
	\]
	Thus $E_0/F_0$ is a Galois extension with group $A_6$. Equivalently, $T\coloneqq \Spec E_0$ is an $A_6$-torsor over $F_0$.
	
	The generic fiber of $M\to\mathcal B$ is the twisted quadric ${}^TQ\subset \PP_{F_0}({}^TW)\simeq \PP^4_{F_0}$, and the generic fiber of $M\to X$ is a split projective
	space $\PP^4_{k(X)}$; see for example \cite[Lemma~10.1(a),(b)]{DR15}. In particular, $M$ is birationally equivalent to $X\times\PP^4_k$.
	
	\begin{lemma}\label{lem:sylow}
		The subgroup
		\[
		P=\langle\sigma,\tau\rangle\subset A_6,\qquad
		\sigma=(1234)(56),\quad \tau=(13)(56),
		\]
		is a Sylow $2$-subgroup of order $8$ and index $45$. Let $U=k^2$
		be the $P$-representation with action
		\[
		\sigma(a,b)=(-b,a),\qquad \tau(a,b)=(-a,b).
		\]
		There is a $P$-equivariant closed immersion
		$\PP_k(U)\hookrightarrow Q$ whose image is a smooth plane conic.
	\end{lemma}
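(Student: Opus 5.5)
To prove Lemma~\ref{lem:sylow}, I would first check the group theory and then build the conic as the image of a $P$-equivariant Veronese map $\PP_k(U)\to\PP_k(W)$.

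\textbf{Group theory.} Both $\sigma$ and $\tau$ are products of a $4$-cycle or transposition with $(56)$, so they are even. A direct computation gives $\sigma^4=1$, $\tau^2=1$ and $\tau\sigma\tau=\sigma^{-1}$. Hence $P$ is dihedral of order $8$. Since $|A_6|=360=8\cdot 45$, $P$ is a Sylow $2$-subgroup of index $45$. It is also immediate that the given formulas define a representation of $P$ on $U$.

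\textbf{The equivariant quadratic map.} I look for vectors $v_1,v_2,v_3\in W$ and the quadratic map
\[
\nu\colon (a,b)\longmapsto a^2v_1+ab\,v_2+b^2v_3 .
\]
Under $\sigma(a,b)=(-b,a)$ the monomials transform by $a^2\mapsto b^2$, $b^2\mapsto a^2$, $ab\mapsto -ab$. Under $\tau$ they transform by $a^2\mapsto a^2$, $b^2\mapsto b^2$, $ab\mapsto -ab$. So $\nu$ is $P$-equivariant provided the following hold:
\begin{itemize}
\item $\tau v_1=v_1$;
\item $v_3=\sigma v_1$, which forces $\sigma v_3=v_1$ once $\sigma^2v_1=v_1$;
\item $\sigma v_2=-v_2$ and $\tau v_2=-v_2$.
\end{itemize}
Vectors that are both $\tau$- and $\sigma^2$-fixed have the form $v_1=(x,y,x,y,w,w)$ with $x+y+w=0$; then $v_3=\sigma v_1=(y,x,y,x,w,w)$. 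If this family turns out too small, I would allow $\nu$ to be equivariant only projectively, up to a character of $P$. For $v_2$ the natural choice is $v_2=c\,(0,0,0,0,1,-1)$, which satisfies $\sigma v_2=\tau v_2=-v_2$.

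\textbf{The conditions for landing in $Q$.} The image of $\nu$ lies in $Q$ exactly when the quartic $\langle\nu(a,b),\nu(a,b)\rangle$ vanishes identically, where $\langle\cdot,\cdot\rangle$ is the standard form $\sum_i x_iy_i$. Expanding, this amounts to
\[
\langle v_1,v_1\rangle=\langle v_3,v_3\rangle=0,\quad
\langle v_1,v_2\rangle=\langle v_2,v_3\rangle=0,\quad
\langle v_2,v_2\rangle+2\langle v_1,v_3\rangle=0 .
\]
For the choices above, the two mixed conditions hold automatically, because $v_1$ and $v_3$ have equal fifth and sixth coordinates. Also $\langle v_3,v_3\rangle=\langle v_1,v_1\rangle$, since $\sigma$ preserves the form. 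What remains is one conic equation for $v_1$ inside a small fixed space, plus the equation $c^2=-\langle v_1,\sigma v_1\rangle$ for the scale $c$.

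\textbf{Main obstacle.} The crux is solving these equations over $k$ using only $\sqrt2$ and $\sqrt{-3}$. In the two-parameter family above one gets $\langle v_1,v_1\rangle=2(x^2+y^2+w^2)=0$ with $w=-x-y$, that is $x^2+xy+y^2=0$. This has the solution $y=\omega x$, with $\omega$ a primitive cube root of unity, available because $-3$ is a square. Then $c^2=-\langle v_1,\sigma v_1\rangle$ must be a square, and here I expect $\sqrt2$ to enter. If it does not come out as a square, I would enlarge the family by allowing a sign character in the equivariance, or by using $-1$-eigenvectors of $\sigma^2$, and redo the computation. I expect this explicit search to be the main work.

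\textbf{Closed immersion and smoothness.} With a solution in hand, I check that $v_1,v_2,v_3$ are linearly independent. Then $\nu$ induces an injective linear map $\mathrm{Sym}^2U\to W$, and the induced map $\PP_k(U)\to Q$ is the Veronese embedding followed by a linear embedding $\PP^2\hookrightarrow\PP_k(W)$. Its image is therefore a smooth plane conic and the map is a $P$-equivariant closed immersion.
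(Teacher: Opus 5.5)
Your approach is essentially the paper's. Your vectors $v_1,v_2,v_3$ span exactly the $3$-dimensional subspace $W^{\sigma^2}$ fixed by the central element $\sigma^2$. The paper likewise writes down a $P$-equivariant Veronese map $[a:b]\mapsto[a^2+b^2:\eta(a^2-b^2):2\eta\xi ab]$ onto the conic $12r^2+4s^2+2z^2=0$ in $\PP_k(W^{\sigma^2})$, where $\eta^2=-3$ and $\xi^2=2$.

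The step you left open, flagged as the main obstacle with a fallback plan, closes directly in your family. You need to complete it for the proof to be finished:
\[
\langle v_1,\sigma v_1\rangle=4xy+2w^2=2(x^2+4xy+y^2)=6xy \quad\text{on } x^2+xy+y^2=0 .
\]
Taking $x=1$, $y=\omega$, the condition becomes $c^2=-6\omega$. This is a square because $-6=2\cdot(-3)$ and $\omega=(\omega^2)^2$, so $c=\xi\eta\omega^2$ works; no sign character or $(-1)$-eigenvectors are needed. Then
\[
v_1=(1,\omega,1,\omega,\omega^2,\omega^2),\qquad v_3=(\omega,1,\omega,1,\omega^2,\omega^2),
\]
and $v_2$ are linearly independent. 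Indeed, the minor on coordinates $1,2,5$ equals $c(1-\omega^2)$. This is nonzero since $c\neq0$ and $\omega^2\neq1$, using $\mathrm{char}(k)\neq2,3$, which follows from $2,-3\in k^{\times2}$. Your final paragraph on the closed immersion and smoothness then applies as written.
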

	
	\begin{proof}
		We have $\sigma^4=\tau^2=1$ and
		$\tau\sigma\tau=\sigma^{-1}$. It follows that $P$ is isomorphic to the dihedral group of order $8$. Since $|A_6|=360$, this is a Sylow $2$-subgroup
		and has index $45$. It is immediate to check that $U$ is a $P$-representation.	
		
		Since $\sigma^2=(13)(24)$ is central in $P$, its fixed subspace $W^{\sigma^2}\subset W$ is $P$-invariant. It consists of the
		vectors $(a,b,a,b,c,d)\in k^6$ such that $2a+2b+c+d=0$, and hence has dimension three. We parametrize it by
		\[
		(r,s,z)\longmapsto
		(r+s,r-s,r+s,r-s,-2r+z,-2r-z).
		\]
		Its projectivization is a plane in $\PP_k(W)$, and its intersection with
		$Q$ is the conic of equation $12r^2+4s^2+2z^2=0$. The action of $P$ on $W^{\sigma^2}$ is given in these coordinates by
		\[
		\sigma(r,s,z)=(r,-s,-z),
		\qquad
		\tau(r,s,z)=(r,s,-z).
		\]
		Choose $\eta,\xi\in k^\times$ such that $\eta^2=-3$ and $\xi^2=2$. Consider the $P$-equivariant closed immersion
		\[
		\PP_k(U)\longrightarrow \PP_k(W^{\sigma^2}),
		\qquad
		[a:b]\longmapsto [a^2+b^2:\eta(a^2-b^2):2\eta\xi ab].
		\]
		Its image is contained in the plane conic $\PP(W^{\sigma^2})\cap Q$. We obtain the desired $P$-equivariant closed immersion $\PP_k(U)\hookrightarrow Q$.
	\end{proof}
	
	\begin{remark}
		Over an algebraic closure of $k$, the $P$-quadric $Q$ appears in \cite[Remark~6.6]{HT25}, where its stable linearizability was left open. The stable linearizability of this action was later proved in \cite[Theorem~4.1, Case~(1)]{CTZ26}; their argument also uses an invariant conic in $\PP(\chi_1\oplus\chi_2\oplus\chi_3)$. The construction in Lemma~\ref{lem:sylow} gives another direct proof of stable linearizability: after twisting by any $P$-torsor $S/F$, the $P$-equivariant immersion $\PP(U)\hookrightarrow Q$ yields $\PP({}^S U)\simeq\PP^1_F\hookrightarrow{}^S Q$, so every twist ${}^S Q$ has an $F$-rational point, and the conclusion follows from \cite[Theorems~1.1(a) and~10.2]{DR15}. To our knowledge, the linearizability of this $P$-action remains open.
		
		Since \cite[Theorem~4.1, Case~(1)]{CTZ26} is stated and proved only over an algebraically closed field of characteristic zero, we prove Lemma~\ref{lem:sylow} directly rather than appeal to that result. This also explains the role of the hypotheses $2,-3\in k^{\times2}$.
	\end{remark}

	\begin{proposition}\label{prop:comparison}
		We have
		\[
		\mathcal B\times\PP^3_k
		\bir M
		\bir X\times\PP^4_k.
		\]
		In particular, $\mathcal B\times\PP^3_k$ is $k$-rational.
	\end{proposition}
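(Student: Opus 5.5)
The plan is to treat the two projections from $M$ separately.

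The right-hand equivalence $M\bir X\times\PP^4_k$ was already recorded before Lemma~\ref{lem:sylow}. The generic fiber of $M\to X$ is a split $\PP^4_{k(X)}$ by \cite[Lemma~10.1(b)]{DR15}, an application of Hilbert 90/no-name lemma for the linear representation $W$ over the generically free $A_6$-variety $Q$. Combined with Theorem~\ref{thm:quadric}, this gives that $X\times\PP^4_k$, and hence $M$, is $k$-rational.

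For the left-hand equivalence, the generic fiber of $M\to\mathcal B$ is the twisted quadric ${}^TQ\subset\PP^4_{F_0}$, a smooth quadric threefold over $F_0$. A smooth quadric with a rational point is rational: projecting from the point gives a birational map to $\PP^3$. Hence it suffices to produce an $F_0$-point on ${}^TQ$, and then $M\bir\mathcal B\times\PP^3_k$.

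To find the point, let $F_1=E_0^P$, which has degree $[A_6:P]=45$ over $F_0$, an odd number. Over $F_1$ the torsor $T$ reduces to the $P$-torsor $S=\Spec E_0\to\Spec F_1$, so ${}^TQ\times_{F_0}F_1\simeq{}^SQ$. The $P$-equivariant closed immersion $\PP_k(U)\hookrightarrow Q$ of Lemma~\ref{lem:sylow} twists to a closed immersion $\PP_{F_1}({}^SU)\hookrightarrow{}^SQ$. Since ${}^SU$ is a twisted vector space, it is a $2$-dimensional $F_1$-vector space by Hilbert 90 (again \cite[Lemma~10.1]{DR15}), so $\PP({}^SU)\simeq\PP^1_{F_1}$ has $F_1$-points. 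Therefore ${}^TQ$ acquires a point over the odd-degree extension $F_1/F_0$.

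By Springer's theorem, the quadratic form defining ${}^TQ$, which is nondegenerate since char $\neq 2$, is then isotropic over $F_0$, so ${}^TQ(F_0)\neq\emptyset$. One must check that ${}^TQ$ really is the zero locus of a quadratic form on the $5$-dimensional space ${}^TW$. This holds because the form $\sum x_i^2$ on $W$ is $A_6$-invariant and therefore descends to ${}^TW$.

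Combining the two equivalences gives $\mathcal B\times\PP^3_k\bir M\bir X\times\PP^4_k$, which is $k$-rational. The main obstacle I expect is the bookkeeping of twisting: identifying the base change of ${}^TQ$ to $F_1$ with the $P$-twist ${}^SQ$, and showing that the equivariant immersion twists to an immersion of $\PP^1_{F_1}$. Both follow from functoriality of twisting along the subgroup inclusion $P\subset A_6$.
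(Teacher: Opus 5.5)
Your proposal is correct and follows essentially the same route as the paper. It uses the split $\PP^4$-fibration over $X$, reduces the $A_6$-torsor to the Sylow $2$-subgroup $P$ over $F_1=E_0^P$, twists the $P$-equivariant conic to get $\PP^1_{F_1}\hookrightarrow({}^TQ)_{F_1}$, and descends the point by Springer's theorem over the odd-degree extension $F_1/F_0$. One small correction: nondegeneracy of $\sum x_i^2$ on $W$, which gives the smoothness of ${}^TQ$ needed for projecting from a point, requires $6$ to be invertible and not just $2$, since in characteristic $3$ the vector $(1,\dots,1)$ lies in $W$ and in the radical; the hypothesis $-3\in k^{\times 2}$ guarantees this.
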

	
	The reduction to a Sylow $2$-subgroup in the proof below is an instance of the versality criterion for quadrics of Duncan--Reichstein \cite[Theorem~10.2]{DR15}; see also \cite[Proposition~5.3]{HT25}. A related argument, based on twisting an equivariant morphism from $\PP^1$, appears in the proof of \cite[Lemma~2.2]{Sca26}.

	\begin{proof}
		The birational equivalence $M\bir X\times\PP^4_k$ was observed above. For the other birational equivalence, set $F_1\coloneqq E_0^P$, and consider the $P$-torsor
		\[
		S\coloneqq \Spec E_0\longrightarrow \Spec F_1.
		\]
		The base change of the $A_6$-torsor
		$T\to\Spec F_0$ to $F_1$ admits $S$ as a reduction of structure group
		from $A_6$ to $P$, that is, we have an isomorphism of $A_6$-torsors over $F_1$:
		\[
		S\times^P A_6 \simeq T_{F_1}.
		\]
		Therefore
		\[
		({}^TQ)_{F_1}
		\simeq
		(S\times^P A_6)\times^{A_6}Q
		\simeq
		S\times^P Q
		=
		{}^SQ.
		\]
		Twisting the $P$-equivariant closed
		immersion of Lemma~\ref{lem:sylow} gives a closed immersion
		$
		\PP^1_{F_1}\hookrightarrow({}^TQ)_{F_1},
		$
		so $({}^TQ)(F_1)\neq\emptyset$. Since $[F_1:F_0]=[A_6:P]=45$ is odd, Springer's theorem \cite{Spr52} (see also \cite[Corollary~18.5]{EKM08}) implies that ${}^TQ(F_0)\neq\emptyset$. Hence the smooth quadric threefold ${}^TQ$ is $F_0$-rational. As ${}^TQ$ is the generic fiber of $M\to\mathcal B$, we obtain $M\bir\mathcal B\times\PP^3_k$. Finally, $X$ is $k$-rational by Theorem~\ref{thm:quadric}, and hence so is $\mathcal B\times\PP^3_k$.
	\end{proof}
	
	\begin{proof}[Proof of Theorem~\ref{thm:main}]
		View $W\setminus\{0\}\to\PP_k(W)$ as the complement of the zero
		section in the tautological line bundle. Let $U\subset\PP_k(W)$ be the free locus. Then
		$U\to U/A_6$ is an $A_6$-torsor. Since the tautological line bundle $\OO_{\PP(W)}(-1)$ is naturally $A_6$-linearized, its restriction to $U$ descends to a line bundle on $U/A_6$; see for example \cite[Theorem~3.6, Remark~3.7]{CTS07}. It follows that
		\[
		W/A_6\bir \mathcal B\times\A^1_k.
		\] 
		Let $V$ be the $6$-dimensional permutation representation of $A_6$ over $k$. Since $6$ is invertible in $k$, we have $A_6$-equivariant decomposition $V=W\oplus k$, where the action on the last summand is trivial, and hence
		\[
		V/A_6\bir W/A_6\times \A^1_k\bir\mathcal B\times\A^2_k.
		\]
		It now follows from Proposition~\ref{prop:comparison} that
		\[
		V/A_6\times\A^1_k\bir\mathcal B\times\A^3_k
		\]
		is rational, as desired.
	\end{proof}

\end{document}